\definecolor{blue}{RGB}{0,0,255} 
\providecommand{\keywords}[1]{\textbf{\textit{keywords:}} #1}
\newtheorem{algorithm}{Algorithm}
\providecommand{\keywords}[1]{\textbf{\textit{keywords:}} #1}
\newtheorem{theorem}{Theorem}
\newtheorem{corollary}{Corollary}
\newtheorem{lemma}{Lemma}
\newtheorem{remark}{Remark} 
\newcommand{\argmin}{\mathop{\rm argmin}}
\newtheorem{assumption}{Assumption}
\newtheorem{definition}{Definition}
\newcommand{\za}[1]{{\color{black}#1}}
\newcommand{\ze}[1]{{\color{black}#1}}
\newcommand{\zal}[1]{{\color{black}#1}}
\newcommand{\z}[1]{{\color{black}#1}}
\newcommand{\zz}[1]{{\color{black}#1}}
\newcommand{\zr}[1]{{\color{black}#1}}
\newcommand{\aj}[1]{{\color{black}#1}}
\title{\LARGE \bf
Convergence Analysis of Non-Strongly-Monotone Stochastic Quasi-Variational Inequalities
}
\author{
  \begin{tabular}{c}
    Zeinab Alizadeh \textsuperscript{*} \qquad Afrooz Jalilzadeh\footnote{Department of Systems and Industrial Engineering, The University of Arizona, Tucson, AZ, USA.\\
     \texttt{\{zalizadeh, afrooz\}@arizona.edu}}
  \end{tabular}
}
\date{}
\begin{document}

\maketitle
\thispagestyle{empty}
\pagestyle{empty}


\begin{abstract}
While Variational Inequality (VI) is a well-established mathematical framework that subsumes Nash equilibrium and saddle-point problems, less is known about its extension, Quasi-Variational Inequalities (QVI). QVI allows for cases where the constraint set changes as the decision variable varies allowing for a more versatile setting. In this paper, we propose extra-gradient and gradient-based methods for solving a class of monotone Stochastic Quasi-Variational Inequalities (SQVI) and establish a rigorous convergence rate analysis for these methods. Our approach not only advances the theoretical understanding of SQVI but also demonstrates its practical applicability. Specifically, we highlight its effectiveness in reformulating and solving problems such as generalized Nash Equilibrium, bilevel optimization, and saddle-point problems with coupling constraints.
\\
\keywords{ Quasi-Variational Inequalities \and Stochastic Optimization \and Generalized Nash Equilibrium \and
Extra-Gradient Method }

\end{abstract}

\section{Introduction}
Variational Inequality (VI) problems find applications in various areas like convex Nash games, traffic equilibrium, and economic equilibrium \cite{facchinei2007finite}.
The Stochastic Variational Inequality (SVI) extends VI theory to address decision-making problems involving uncertainty \zr{\cite{shanbhag2013stochastic,gurkan1996sample,koshal2012regularized,iusem2017extragradient,alizadeh2024randomized}}.
An extension of VIs, known as Quasi-Variational Inequality (QVI), emerges when the constraint set depends on the decision variable. From a Nash equilibrium game perspective, QVI captures the interdependencies among players' strategy sets, particularly in scenarios involving shared resources.

In this paper, we consider a Stochastic QVI (SQVI) problem. In particular, the goal is to find $x^*\in K(x^*)$ such that
\begin{align*}\label{prob:SQVI}\tag{\bf SQVI}
\langle F(x^*), y-x^* \rangle\geq 0, \quad \forall y\in K(x^*),
\end{align*}
where $K:X\to 2^X$ is a lower-hemicontinous set-valued mapping with non-empty, closed and convex values \aj{such that} $K(x)\subseteq X$ for all $x\in X$, $X\subseteq \mathbb R^n$ is a convex and compact set,  $F(x)\triangleq \mathbb E[G(x,\xi)]$, $\xi: \Omega \to \mathbb R^d$, ${G}: X \times \mathbb R^d  \rightarrow
\mathbb{R}^n$, and the associated probability space is denoted by $(\Omega, {\cal F}, \mathbb{P})$.

Although the theoretical results and algorithm development for VIs are rich and fruitful \cite{alizadeh2024randomized,jalilzadeh2019proximal,korpelevich1976extragradient,doi:10.1137/S1052623403425629,tseng2000modified,nesterov2007dual,malitsky2015projected,malitsky2020forward}, research studies on QVIs remain limited and most of the existing methods for solving VIs are not amendable for solving \eqref{prob:SQVI} which calls for the development of new techniques and iterative methods. In particular, the primary focus of existing research studies for QVIs is on solution existence \cite{ravat2017existence} and the development of algorithms often requires restrictive assumptions such as strong monotonicity \cite{mijajlovic2019gradient}. Moreover, some efforts have been made to develop algorithms for solving generalized Nash game and saddle point (SP) problems with coupled constraints; however, their convergence typically requires the constraints to
be linear \cite{dai2024optimality,jordan2023first} or jointly convex \cite{alizadeh2024randomized,boob2023first}. To the best of our knowledge, there is no method
with convergence guarantees for general nonlinear constraints. To fill this gap, in this paper, we aim to develop efficient inexact iterative methods for solving \eqref{prob:SQVI} under less restrictive assumptions with convergence rate guarantees. 
In the deterministic setting, several studies have explored numerical approaches for solving QVIs \cite{pang2005quasi,antipin2013second,facchinei2014solving,mijajlovic2015proximal,noor2000new,ryazantseva2007first,salahuddin2004projection,noor2007existence}. Notably, \cite{mijajlovic2019gradient} and \cite{nesterov2006solving} demonstrated a linear convergence rate for the strongly monotone QVI problem. In the stochastic regime, in our previous work \cite{alizadeh2022inexact}, we obtained a linear convergence rate under strong monotonicity assumptions. In this paper, we extend the result to monotone SQVIs, where the operator $F$ satisfies the quadratic growth property (see Definition \ref{Quadratic growth def}).

\subsection{Outline of the Paper} \aj{In the next section, we explore how the SQVI problem fits into well-known problems in optimization and game theory. In Section \ref{contribution}, we outline the key contributions of our work. Then, in Section \ref{assum}, we lay out the key assumptions needed for our convergence analysis. Section \ref{prop} introduces inexact extra-gradient and gradient-based algorithms, and in Section \ref{numeric}, we show the effectiveness of the proposed methods in solving real-world examples.
In Section \ref{summary}, we summarize our key findings and propose directions for future research.}

\section{Applications}\label{sec:applications}
\subsection{Generalized Nash Equilibruim}
\aj{Nash equilibrium (NE) is a fundamental concept in game theory where a finite collection of selfish agents compete with each other and seek to optimize their own individual objectives. An NE is described as a collection of specific strategies chosen by all the players, where no player can reduce their cost by unilaterally changing their strategy within their feasible strategy set. The Generalized Nash equilibrium Problem (GNEP) is an extension of the classic NE Problem (NEP), in which   players interact
also at the level of the feasible sets, i.e., each player's strategy set depends on other
players’ strategies. This situation arises naturally if the players share some common resource, e.g.,
a communication link, an electrical transmission line, etc. GNEP has been extensively utilized in the literature to formulate applications arising in economics, operations research, and other fields \cite {facchinei2010generalized,krilavsevic2023learning}.
Consider $N$ players each with cost function $f_i(x_i,x_{(-i)})\triangleq \mathbb E[h(x_i,x_{(-i)},\xi)]$ for $i=1\hdots,N$, where $x_i$ is the strategy of player $i$ , $x_{(-i)}$ is the strategy of other players, { and $\xi: \Omega \to \mathbb{R}^d$ is a random variable defined on the probability space $(\Omega, \mathcal{F}, \mathbb{P})$}. Each player $i$'s objective is to solve the following optimization problem:} 
\begin{align*}
    \min_{x_i} f_i(x_i,x_{(-i)}) \quad \hbox{s.t.}\quad x_i \in K_i(x_{(-i)}),
\end{align*}
where $K_i(x_{(-i)}) =\{ x_i  \in \mathbb R^{n_i} |  g_i(x_i,x_{(-i)})\leq 0\}  $ is a closed convex set-valued map.
$f_i,g_i:\mathbb R^n\times\mathbb R^m\to \mathbb R$ are continuously differentiable. By defining $K(x)=\prod_{i=1}^N K_i(x_{(-i)})$ and $F(x) =[ \nabla_{x_i} f_i(x)]_{i=1}^N $, finding a GNE will be equivalent to solve the following SQVI problem\zr{:}
$\langle F(x^*), x-x^* \rangle \geq 0, \forall x \in K(x^*)$.

\subsection{Bilevel Optimization} 
A bilevel optimization problem is hierarchical decision-making where one optimization problem is nested within another. This problem has a variety of applications, including data hyper cleaning, hyper-parameter optimization, and meta-learning in machine learning \cite{li2020improved,franceschi2018bilevel}. This problem can be formulated as follows:
\begin{align}\label{eq:bilevel}
    \min_{u\in U}~f(u,w(u))\quad \hbox{s.t.}\quad w(u)\in\argmin_{w\in W}~g(u,w),
\end{align}
where $f(u,w)\triangleq\mathbb E[h(u,w,\xi)]$, $f,g:\mathbb R^n\times\mathbb R^m\to \mathbb R$ are continuously differentiable, $U\subseteq \mathbb R^n$ and $W\subseteq \mathbb R^m$ are convex and compact sets. This problem can be equivalently represented as the following minimization problem: 
\begin{align}\label{eq:bilevel-single}
    \min_{u\in U,w\in W}~f(u,w)\quad \hbox{s.t.}\quad w\in S(u),
\end{align}
where $S(u)$ is the solution to the lower-level problem in \eqref{eq:bilevel} for any given $u\in U$. Note that \eqref{eq:bilevel-single} in its general form can be NP-hard, therefore, it is imperative to seek a first-order stationary solution, i.e., finding $(u^*,w^*)$ such that $\langle \nabla_u f(u^*,w^*), u-u^* \rangle+\langle \nabla_w f(u^*,w^*), w-w^* \rangle \geq 0$ for any $u\in U$ and $w\in S(u^*)$, which is a special case of \eqref{prob:SQVI} by defining $x\triangleq [u^\top,w^\top]^\top$, $F(x)\triangleq [\nabla_u f(u,w),\nabla_w f(u,w)]^\top$, and $K(x)\triangleq U\times S(u)$. 

\subsection{Saddle Point Problems with Coupling Constraints}
\aj{Recently, minimax optimization problem, i.e., $\min_{u\in U}\max_{w\in W} f(u,w)$, has received considerable attention due to its relevance to a wide range of machine learning (ML) problems such as reinforcement learning, generative adversarial networks (GAN), fairness in machine learning, and generative adversarial imitation learning. Convex-concave minimax problems can be also viewed from a game theory perspective in which the objective function severs as a payoff function and one player aims to minimize the payoff while the second player's goal is to maximize it. In this context, a saddle point solution $(u^*,w^*)$ serves as both a minimum of $f$ in the $u$-direction and a maximum of $f$ in the $w$-direction. 
Here, $w^*$ represents the inner player's best response to their opponent's strategy $u^*$, and a saddle point $(u^*,w^*)$ is also called a Nash equilibrium (NE).

Here, we consider a more general SP problem where the constraint depends on the decisions of both players, i.e.,
\begin{align}\label{minimax}
    \min_{u\in U}\max_{w\in W} f(u,w) \quad \hbox{s.t.}\quad  g(u,w)\leq 0,
\end{align} 
where $f(u,w)\triangleq\mathbb E[h(u,w,\xi)]$, $U$ and $W$ are convex sets. Such problems have numerous applications in various fields such as adversarial attacks in network flow problems \cite{tsaknakis2023minimax}. Because of the dependency of the constraint on both variables, if $g$ is not jointly convex in both $x$ and $y$ then this problem cannot be formulated as traditional VI but we can reformulate it as QVI. From the first order optimality condition of \eqref{minimax}, we have that 
$$\langle \nabla_x f(u^*,w^*), u-u^* \rangle \geq 0, \forall u\in \{u\in U\mid g(u,w^*)\leq 0\}  $$
$$\langle \nabla_w f(u^*,w^*), w^*-w \rangle \geq 0, \forall w\in \{ w \in W | g(u^*,w) \leq 0 \}.  $$
Defining $F(x^*) = \begin{bmatrix} \nabla_u f(x^*) \\ -\nabla_\z{w} f(x^*) \end{bmatrix}$ and $K(x^*)\triangleq U(w^*) \times W(u^*)$, solving \eqref{minimax} will be equivalent to solving the following SQVI problem.
$\langle F(x^*), x-x^* \rangle \geq 0, \forall x \in K(x^*)$.}

\section{Contributions}\label{contribution}
\aj{Motivated by the absence of efficient methods for solving non-strongly monotone SQVI problems, we propose extra-gradient and gradient-based schemes for a class of monotone SQVI where the operator satisfies a quadratic growth property. Non-strongly monotone problems are particularly challenging due to the potential lack of a unique solution, which complicates the convergence analysis. Moreover, in QVIs, the constraint set varies with the decision variable, making it even more difficult to establish convergence guarantees.
In this work, we address these challenges by developing methods that not only handle the complexity of the monotone setting but also manage inexact projections when direct computation of the projection onto the constraint set is difficult. Using an inexact approach, we establish the first known convergence rate result for this class of problems, achieving linear convergence in terms of outer iterations. Furthermore, we show an oracle complexity of $\mathcal O(1/\epsilon^2)$; that is, to achieve an $\epsilon$-solution, $\mathcal O(1/\epsilon^2)$ sample operators need to be computed. The oracle complexity will improve to $\mathcal O(\log(1/\epsilon))$ for the deterministic case. To the
best of our knowledge, our proposed algorithms are the first
with a convergence rate guarantee to solve non-strongly monotone SQVIs.}

\section{Preliminaries}\label{assum}
\aj{In this section,  first, we define important notations and
then we present the definitions, assumptions, and essential technical lemmas required for our convergence analysis.}
\subsection{Notations}
 Throughout the paper, $\|x\|$ denotes the Euclidean vector norm, i.e., $\|x\|=\sqrt{x^Tx}$.  \aj{$\mathcal{P}_K [x]$} is the projection of $x$ onto the set $K$, i.e. \aj{$\mathcal{P}_K [x] = \mbox{argmin}_{y \in K} \| y-x \|$}. $\mathbb E[x]$ is used to denote the expectation of a random variable $x$. \aj{We let $X^*$ denote the  set of optimal solution of \eqref{prob:SQVI} problem, which is assumed to be nonempty.}
\subsection{Quadratic Growth Property}\label{sec:QG}
\aj{In this paper, we consider a monotone operator $F$ that has a quadratic growth property, which is defined next. } 
\begin{definition}\label{Quadratic growth def}
\aj{An operator $F$ has a quadratic growth (QG) property
on set $X$ if there exists a constant $\z{\mu_F} > 0 $ such that for any $x \in X $ and some $y \in \mathcal{P}_{X^*}[x]$ 
\begin{align*}
    \langle  F(x)- F(y),x-y \rangle\geq \z{\mu_F}  \|x-y\|^2, \quad \forall x\in X.
\end{align*}}
\end{definition}
\aj{It is worth noting that, unlike the strong monotonicity assumption, the QG property does not imply a unique solution. In fact, QG property is a weaker assumption than strong monotonicity \cite{bello2022quadratic,necoara2019linear}.
As an example of a QG operator, consider function $f(x)\triangleq g(Ax)+c^Tx$, where $g$ is a smooth and strongly convex function, $A\in \mathbb R^{n\times m}$ is a nonzero general matrix and $c\in \mathbb R^n$. One can show that $\nabla f(x)$ satisfies the QG property \cite{necoara2019linear} while $\nabla f(x)$ may not be strongly monotone unless $A$ has a full column rank. See \cite{bello2022quadratic} and \cite{necoara2019linear} for more examples.}

Next, we consider a class of generalized Nash equilibrium problems that can be reformulated as QVIs.
For this class, we explicitly characterize the equilibrium set $X^*$ and show that the associated QVI operator satisfies the QG property.

{\bf Class of QVIs with Quadratic Growth.} Consider a generalized Nash game in which each player $i\in\{1,\cdots,N\}$ solves the following optimization problem:
\begin{align*}
    &\min_{x_i\in X_i} f_i(A_ix_i)+c_i^Tx^*_{(-i)}\\
    &\mbox{s.t.}\quad B_ix_i+C_ix_{(-i)}^*=d_i,
\end{align*}
where $X_i\triangleq\{x_i\mid D_ix_i\leq q_i\}$ is a polyhedral set, function $f_i$ is strongly convex, $x^*_{(-i)}$ denotes the equilibrium decisions of the other players. Note that the problem may not be strongly monotone, when $A$ is not full-rank. This game can be equivalently reformulated as a quasi variational inequality (QVI) problem. 

Let $x = (x_1, \ldots, x_N)$ denote the joint decision vector, and define the mapping
$$F(x) = \big( A_1^\top \nabla f_1(A_1 x_1), \ldots, A_N^\top \nabla f_N(A_N x_N) \big).$$
Each player’s feasible set depends on the others’ strategies through the shared constraints $(B_i x_i + C_i x_{(-i)} = d_i)$, leading to the coupling feasible region
$$K_i(x_{(-i)}) = \{x_i \in X_i : B_i x_i + C_i x_{(-i)} = d_i, \ \forall i=\{1,\hdots,N\}\}.$$
The Nash equilibrium conditions then correspond to finding $x^* \in K(x^*)=\Pi_{i}K_i(x_{(-i)}^*)$ such that
$\langle F(x^*), x - x^* \rangle \ge 0, \quad \forall x \in K(x^*),$
which is a QVI problem. Inspired by Necoara, Nesterov,
and Glineur [2], we show that the QVI reformulation satisfies QG property if there exits $\eta_i>0$ such that $B_i^TB_i\preceq \eta_i A_i^TA_i$ for any $i$. 

First we show that for each $i$ there exists
a unique pair ($t_i^*, T_i^*)$ such that $A_ix_i^*=t_i^*$ and $A_i^\top \nabla f_i(A_i x_i^*)=T_i^*$. Let $x_i^{*1},x_i^{*2}$ be two optimal strategy of player $i$, and define $\phi_i(x_i,x_{(-i)})\triangleq f_i(A_ix_i)+c_i^Tx^*_{(-i)}$, then using convexity of $\phi_i(\cdot,x_{(-i)})$ and the fact that $x_i^*$ is optimal, i.e., $\phi_i(x_i^*,x^*_{(-i)})\leq \phi_i(x_i,x^*_{(-i)})$ for all $x_i\in K_i(x^*_{(-i)})$. It is also clear that $\tfrac{x_i^{*1}+x_i^{*2}}{2}$ is a feasible point, hence, we have that $\phi_i(\tfrac{x_i^{*1}+x_i^{*2}}{2},x^*_{(-i)})=\tfrac{\phi_i(x_i^{*1},x^*_{(-i)})}{2}+\tfrac{\phi_i(x_i^{*2},x^*_{(-i)})}{2}$, which implies that $f_i(\tfrac{A_ix_i^{*1}+A_ix_i^{*2}}{2})=\tfrac{f_i(A_ix_i^{*1})+f_i(A_ix_i^{*2})}{2}$. On the other hand, since $f_i$ is strongly monotone with modulus $\mu_i$, we have that $$f_i(\tfrac{A_ix_i^{*1}+A_ix_i^{*2}}{2})\leq \tfrac{f_i(A_ix_i^{*1})}{2}+\tfrac{f_i(A-ix_i^{*2})}{2}-\tfrac{\mu_i}{8}\|A_ix_i^{*1}-A_ix_i^{*2}\|^2.$$ Hence, we conclude that $A_ix_i^{*1}=A_ix_i^{*2}$.

Therefore, for each player $i$, the optimal solution set can be charactrized as $X_i^*(x^*_{(-i)})=\{x_i\mid A_ix_i=t_i^*, B_ix_i=d_i-C_ix_{(-i)}^*, D_ix_i\leq q_i\}$. Since $X_i^*(x^*_{(-i)})$ is a polyhedron and is nonempty, then from Hoﬀman inequality we have that there exists some positive constant depending on the matrices $A_i$, $B_i$ and $D_i$
describing the polyhedral set $X_i^*(x^*_{(-i)})$, i.e., $\theta_i (A_i, B_i,D_i) > 0$, such that for any $x_i\in X_i$:
\begin{align*}\|x_i-\bar x_i\|^2&\leq \theta_i^2(A_i,B_i,D_i)\left(\|A_ix_i-A_i\bar x_i\|^2+\|B_ix_i-B_i\bar x_i\|^2\right)\\&\leq \eta_i \theta_i^2(A_i,B_i,D_i)\|\|A_ix_i-A_i\bar x_i\|^2,\end{align*}
where $\bar x_i=\mathcal P_{X_i^*(x^*_{(-i)})}(x_i)$ and we used the assumption that $B_i^TB_i\preceq \eta_i A_i^TA_i$. Using this result, one can show that gradient of each player objective satisfies quadratic growth property:
\begin{align*}
    \langle A_i^T\nabla f_i(A_ix_i)-A_i^T\nabla f_i(A_i\bar x_i),x_i-\bar x_i \rangle&=\langle \nabla f_i(A_i x_i)-\nabla f_i(A_i\bar x_i),A_ix_i-A_i\bar x_i\rangle\\&\geq \mu_i\|A_ix_i-A_i\bar x_i\|\\&\geq\tfrac{\mu_i}{\sqrt{\eta_i}\theta_i(A_i,B_i,D_i)}\|x_i-\bar x_i\|.
\end{align*}
Therefore, summing across $i$, the full operator $F$ satisfies:
  $ \langle F(x) - F(\bar x), x - \bar x \rangle \geq \mu \|x - \bar x\|^2
  $ where $\mu=\min_i\{\tfrac{\mu_i}{\sqrt{\eta_i}\theta_i(A_i,B_i,D_i)}\}$ and $\bar x\in X^*=\Pi_{i=1}^N X_i^*(x^*_{(-i)})$.

\subsection{Assumptions and Technical Lemmas}
\aj{Now, we state our main assumptions considered in the paper.}
{\begin{assumption}\label{lipschitz}
\aj{(i) The set of optimal solution $X^*$ is nonempty.
(ii) Operator $F:X\rightarrow \mathbb R^n$ is monotone, i.e., $\langle F(x)-F(y),x-y\rangle\geq 0$ for all $x,y\in X$, and satisfies the QG property.
(iii) $F$ is $L$-Lipschitz continuous on $X$, i.e., 
\begin{align*}
\|F(x)-F(y)\|\leq L\|x-y\|,\quad \forall x, y \in X.
\end{align*}}
\end{assumption}}



If $\mathcal F_{k}$ denotes the information history at epoch $k$, then we have the following requirements on the associated filtration
{which imposes unbiasedness and bounded variance conditions on the stochastic error in estimating $F(x_k)$ using a finite sample average.}

\begin{assumption}\label{assump_error}
 There exists $\nu>0$ such that $\mathbb E[\bar w_{k,N_{k}}\mid  \mathcal F_{k}]=0$ and $\mathbb E[\| \bar w_{k,N_{k}}\|^2\mid  \mathcal F_{k}]\leq \tfrac{\nu^2}{N_{k}}$  holds almost surely 
 for all $k$, 
where $\mathcal{F}_k \triangleq \sigma\{x_0, x_1, \hdots, x_{k-1}\}$ and $\bar w_{k,N_k} \triangleq \tfrac{1}{N_k}{\sum_{j=1}^{N_k} \zal{( G(x_k,\xi_{j,k})-  F(x_k))}}$.  
\end{assumption} 

In our analysis, the following technical lemma for projection mappings {is used}.
\begin{lemma}\label{lem1}\cite{bertsekas2003convex}
\noindent Let $ X\subseteq \mathbb{R}^n $  be a nonempty closed and convex set. Then the following hold:
(a) $\|\mathcal{P}_X [u]- \mathcal{P}_X [v]\| \leq \|u-v\| $ for all $ u,v \in \mathbb{R}^n$;
(b) $ (\mathcal{P}_X [u]-u)^T(x-\mathcal{P} _X [u]) \geq 0 $ for all $u \in \mathbb{R}^n$ and $x \in X$.  
\end{lemma}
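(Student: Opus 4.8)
The plan is to derive part (b) first as the variational (first-order optimality) characterization of the projection, and then obtain the nonexpansiveness in part (a) as a direct consequence by applying (b) at both points. Since $X$ is nonempty, closed, and convex, for any $u\in\mathbb{R}^n$ the map $y\mapsto\|y-u\|^2$ is continuous and coercive, so the Weierstrass theorem guarantees a minimizer over $X$, and strong convexity of the squared norm guarantees it is unique; hence $\mathcal{P}_X[u]$ is well defined. This existence/uniqueness step is what lets me treat $\mathcal{P}_X[u]$ as a genuine point and differentiate the objective along feasible directions.

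For part (b), write $p=\mathcal{P}_X[u]$ and fix an arbitrary $x\in X$. By convexity of $X$, the segment $p+t(x-p)$ lies in $X$ for all $t\in[0,1]$, so the scalar function $\phi(t)=\|p+t(x-p)-u\|^2$ attains its minimum over $[0,1]$ at $t=0$. Computing $\phi'(t)=2\langle p+t(x-p)-u,\,x-p\rangle$ and imposing the one-sided optimality condition $\phi'(0)\geq 0$ yields $\langle p-u,\,x-p\rangle\geq 0$, which is exactly the claim in (b). This is the technical heart of the lemma; everything else is algebra.

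For part (a), let $p=\mathcal{P}_X[u]$ and $q=\mathcal{P}_X[v]$. I apply (b) twice: once at the base point $u$ with test point $x=q\in X$, giving $\langle p-u,\,q-p\rangle\geq 0$, and once at the base point $v$ with test point $x=p\in X$, giving $\langle q-v,\,p-q\rangle\geq 0$. Adding the two inequalities and rearranging gives $\langle (v-u)-(q-p),\,q-p\rangle\geq 0$, i.e. $\langle v-u,\,q-p\rangle\geq\|q-p\|^2$. Applying Cauchy--Schwarz to the left-hand side, $\|v-u\|\,\|q-p\|\geq\|q-p\|^2$, and dividing by $\|q-p\|$ (the bound is trivial when $q=p$) yields $\|q-p\|\leq\|v-u\|$, which is part (a).

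The only genuine obstacle is the optimality argument in (b): one must be careful that the minimum of $\phi$ over $[0,1]$ sits at the left endpoint, so that only the sign of the right derivative $\phi'(0)$ is available rather than the stationarity identity $\phi'(0)=0$. Apart from that, the derivation is entirely elementary, which is consistent with the fact that the result is quoted from a standard reference \cite{bertsekas2003convex}.
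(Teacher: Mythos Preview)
Your proof is correct and is the standard argument for these classical projection properties. The paper does not actually provide a proof of this lemma; it is quoted from \cite{bertsekas2003convex} without demonstration, so there is no ``paper's own proof'' to compare against, but your derivation is exactly the one found in that reference.
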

{Next, we define a gap function to measure the quality of the solution obtained from the algorithm.
\begin{definition}[Gap function.]\label{gap func}
    For a given iterate $x$ we use $\text{dist}(x,X^*)\triangleq \|x-\mathcal P_{X^*}(x)\|$ to find the distance of the solution obtained by the algorithm from the optimal solution set $X^*$. Moreover, we call $x$ to be an $\epsilon$-solution if $\|x-\bar x\|\leq \epsilon$ where $\bar x\in \mathcal P_{X^*}(x)$. 
\end{definition}}

\section{Proposed Method}\label{prop}

\aj{A popular method for solving SVI problems is the stochastic Extra-gradient (SEG) method originally proposed by Korpelevic \cite{korpelevich1976extragradient} for deterministic setting. In particular, when $K(x)=K$ is a closed and convex set, the \eqref{prob:SQVI} problem reduces to SVI. In each iteration of SEG, two consecutive projection steps are calculated with the following updates:
\begin{align*}
&u_k\gets \mathcal P_{K}\left(x_k-\eta \frac{\sum_{j=1}^{N_k}G(x_k,\xi_{j,k})}{N_k} \right);\\
&x_{k+1}\gets \mathcal P_{K}\left(u_k-\eta \frac{\sum_{j=1}^{N_k}G(u_k,\xi_{j,k})}{N_k} \right);
\end{align*}
The challenge in developing algorithms for solving SQVI primarily arises from the dynamic nature of the constraint set, which evolves during iterations. To manage this variation, it is crucial to ensure that the set $K(x)$ does not change drastically as $x$ varies. To achieve this, we impose a condition on the corresponding projection operator, which guarantees that the projection remains contractive with respect to the set $K(x)$.  This assumption is fundamental for convergence in QVI problems and is present in all existing results, indicating its necessity for current approaches \cite{noor1994general,nesterov2006solving,alizadeh2022inexact}.
\begin{assumption}\label{gamma exist}
There exists $\gamma>0$ such that $\|{\mathcal{P}}_{K(x)} [u]- {\mathcal{P}}_{K(y)} [u]\| \leq \gamma \|x-y\| $ for all $ x,y,u \in X$.\\ 
\end{assumption}
In many important applications, the convex-valued set $K(x)$ is of the form $K(x)=m(x)+K$, where $m$ is a point-to-point mapping and $K$ is a closed convex set. In this case,
$${\mathcal{P}}_{K(x)}[u]={\mathcal{P}}_{m(x)+K}[u]=m(x)+{\mathcal{P}}_{K}[u-m(x)], \quad \forall x,u\in X.$$
If $m$ is a Lipschitz continuous with constant $\tilde \gamma$, then
\begin{align*}
    \|{\mathcal{P}}_{K(x)} [u]- {\mathcal{P}}_{K(y)} [u]\| &= \|m(x)-m(y)+{\mathcal{P}}_{K}[u-m(x)]-{\mathcal{P}}_{K}[u-m(y)]\|\\&\leq
    \|m(x)-m(y)\|+\|{\mathcal{P}}_{K}[u-m(x)]-{\mathcal{P}}_{K}[u-m(y)]\|\\
    &\leq 2\|m(x)-m(y)\|\leq 2\tilde \gamma {\|x-y\|}.
\end{align*}
This shows that Assumption \ref{gamma exist} holds. Another important class of problems satisfying Assumption \ref{gamma exist} is when the solution set $K(x)$ is a bounded polyhedral moving set with right-hand-side linearly dependent on $x$. In such cases, by Corollary 2 in \cite{bednarczuk2021lipschitz}, the projection map $\mathcal{P}_{K(x)}$ is Lipschitz continuous.
Furthermore, from an algorithmic perspective to ensure convergence guarantee due to changes in the constraint set, one approach is to introduce a retraction step \cite{mijajlovic2019gradient}, denoted as $(1-\alpha)x_k+\alpha u_k$ for some $\alpha\in[0,1]$. By integrating these ideas, we present the following variant of the SEG method for solving \eqref{prob:SQVI}.
}
\aj{
\begin{align*} 
&v_{k}\gets \mathcal P_{K(x_k)}\left(x_k-\eta \frac{\sum_{j=1}^{N_k}G(x_k,\xi_{j,k})}{N_k} \right);\\
&u_{k}\gets(1-b_k)x_k+b_kv_k\\
&y_k\gets\mathcal P_{K(u_k)}\left(u_k-\eta \frac{\sum_{j=1}^{N_k}G(u_k,\xi_{j,k})}{N_k} \right);\\
&x_{k+1}\gets(1-\alpha_k)x_k+\alpha_k y_k;
\end{align*}

It is worth noting that, the exact computation of the projection onto the constraint set can be computationally expensive or even infeasible in certain scenarios. To overcome this limitation, we propose utilizing approximation techniques, allowing us to obtain practical solutions even when precise projections are unattainable. In particular, we assume that the constraints are comprised of the nonlinear smooth function $g: X\times X\to \mathbb R^m$, i.e., $K(x)=\{y\in X\mid g(x,y)\leq 0\}$, such that $g(x,\cdot)$ is convex for any $x\in X$. To handle the nonlinear constraints, in Algorithm \ref{alg2}, we introduce an inexact Extra-gradient SQVI (iEG-SQVI) method. In this method, at each step of the algorithm, we approximately solve the projection using an inner algorithm denoted as $\mathcal{M}$, which operates for a specified number of inner iterations, denoted as $t_k$. To guarantee fast convergence, Algorithm $\mathcal{M}$ should satisfy the following property.  }
\begin{assumption}\label{assump:inner}
For any $x\in\mathbb R^n$, any closed and convex set $K\subseteq \mathbb R^n$, and an initial point $u_0$, $\mathcal M$ can generate an output $u\in \mathbb R^n$ such that $\|u-\tilde u\|^2\leq C/t^2$ for some $C>0$ satisfying $\tilde u=\mbox{argmin}_{y\in K}\{\tfrac{1}{2}\|y-x\|^2\}$. 
\end{assumption}
\aj{In the following remark, we discuss that several optimization methods satisfy the condition outlined in Assumption \ref{assump:inner}.}
\aj{\begin{remark}
   When constraint set $K(x)$ is represented by (non)linear convex constraints, in steps (1) and (3) of Algorithm \ref{alg2}, one needs to compute the projection operator inexactly, which involves solving a strongly convex optimization problem subject to (non)linear convex constraints. In the field of optimization, various methods have been developed to address such problems. A highly efficient class of algorithms for solving this problem is the first-order primal-dual scheme such as \cite{he2015mirror,malitsky2018proximal,hamedani2018primal}. This method ensures a convergence rate of $\mathcal O(1/t^2)$ in terms of suboptimality and infeasibility, where $t$ represents the number of iterations.
\end{remark}}
\begin{algorithm} \caption{inexact Extra-gradient SQVI (iEG-SQVI) }
\label{alg2}
{\bf Input:}  $x_0\in X$, $\eta>0 $, ${T > 0}$, $\{N_k\}_k$, $\{t_k\}_k$, $\aj{\{b_k\}_k}$, $\{\alpha_k\}_k$ and Algorithm \aj{$\mathcal M$} satisfying Assumption \ref{assump:inner};\\
{\bf for $k=0,\hdots T-1$ do}\\
\quad \mbox{(1)} \aj{Use Algorithm $\mathcal M$ with $t_k$ iterations to find an approximated solution $d_k$ of $$\min_{x\in K(x_k)}\left\|x-\left(x_k-\eta\frac{\sum_{j=1}^{N_k}G(x_k,\xi_{j,k})}{N_k}\right)\right\|^2;$$}\\
\quad \mbox{(2)} $u_{k}\gets (1-b_k)x_k+b_k d_k$;\\
\quad \mbox{(3)} \aj{Use Algorithm $\mathcal M$ with $t_k$ iterations to find an approximated solution $s_k$ of $$\min_{x\in K(u_k)}\left\|x-\left(u_k-\eta\frac{\sum_{j=1}^{N_k}G(u_k,\xi'_{j,k})}{N_k}\right)\right\|^2;$$}\\
\quad \mbox{(4)} $x_{k+1}\gets(1-\alpha_k)x_k+\alpha_k s_k$;\\
{\bf end for}\\
{{\bf Output:} $x_{k+1}$.}
\\
\end{algorithm}
\subsection{Convergence Analysis}
\aj{Next, we introduce a crucial lemma for our convergence analysis. As previously discussed, the problem is not strong monotone and may not possess a unique solution (see \cite{facchinei2014solving,facchinei2003finite}). Therefore, we define the gap function as $\text{dist}(x,X^*)\triangleq \|x-\bar x\|$, where $\bar x\in\mathcal P_{X^*}(x)$. Since the optimal solutions are not explicitly available, we need to express $\bar{x}$ based on its first-order optimality condition. This representation will be utilized in the subsequent convergence analysis of the algorithm.} 
\begin{lemma}\label{lem:proj-optimal}
\aj{Let $X^*$ denote the set of solutions of problem \eqref{prob:SQVI}. Then, for any $\bar x\in X^*$ and $\eta>0$ the following holds}
\begin{align}\label{opt cond}
\bar x = \mathcal{P}_{K(\bar x)}(\bar x-\eta F(\bar x)).
\end{align} 
\end{lemma}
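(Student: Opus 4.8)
The plan is to establish that $\bar x \triangleq \mathcal{P}_{X^*}(x)$, being an element of $X^*$, is itself a solution of \eqref{prob:SQVI}, and then to show that this solution property is exactly equivalent to the fixed-point characterization \eqref{opt cond}. Since $\bar x \in X^*$, by the definition of the solution set we have $\bar x \in K(\bar x)$ together with $\langle F(\bar x), y - \bar x \rangle \geq 0$ for all $y \in K(\bar x)$. The entire argument is therefore a \emph{pointwise} statement about the fixed point $\bar x$; the projection $\mathcal{P}_{X^*}$ plays no role beyond guaranteeing $\bar x \in X^*$, so I do not need any property of the nearest-point map in $X^*$, only that its image lies in the solution set.

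First I would record the standard equivalence between a variational inequality over a fixed convex set and a projection fixed-point equation. Fix any $\eta > 0$ and set $K \triangleq K(\bar x)$, which by hypothesis is nonempty, closed, and convex. I claim that
\begin{align}\label{eq:vi-fixedpoint}
\langle F(\bar x), y - \bar x \rangle \geq 0 \ \ \forall y \in K
\quad \Longleftrightarrow \quad
\bar x = \mathcal{P}_{K}\big(\bar x - \eta F(\bar x)\big).
\end{align}
The forward direction uses Lemma \ref{lem1}(b): writing $z \triangleq \bar x - \eta F(\bar x)$ and $p \triangleq \mathcal{P}_K[z]$, part (b) gives $(p - z)^\top(y - p) \geq 0$ for all $y \in K$. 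To conclude $p = \bar x$, I would use the VI inequality with $y = p$ (legitimate since $p \in K$) to obtain $\langle F(\bar x), p - \bar x \rangle \geq 0$, combine it with the projection inequality evaluated at $y = \bar x$ (legitimate since $\bar x \in K$), and deduce $\|p - \bar x\|^2 \leq 0$ after substituting $z = \bar x - \eta F(\bar x)$ and using $\eta > 0$. The reverse direction is the cleaner one: assuming $\bar x = \mathcal{P}_K[z]$, Lemma \ref{lem1}(b) applied directly with $u = z$ yields $(\bar x - z)^\top(y - \bar x) \geq 0$ for all $y \in K$, and substituting $\bar x - z = \eta F(\bar x)$ and dividing by $\eta > 0$ recovers the VI.

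Having established \eqref{eq:vi-fixedpoint}, the proof concludes by applying its reverse-consistency directly to $\bar x$: because $\bar x \in X^*$ solves the QVI, the left-hand side of \eqref{eq:vi-fixedpoint} holds with $K = K(\bar x)$, so the right-hand side $\bar x = \mathcal{P}_{K(\bar x)}(\bar x - \eta F(\bar x))$ holds, which is precisely \eqref{opt cond}. Since $\eta > 0$ was arbitrary, the identity holds for every $\eta > 0$ as claimed.

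I expect the only genuine subtlety to be bookkeeping about which convex set the projection is taken over: unlike the ordinary VI case, here the feasible set $K(\bar x)$ itself depends on $\bar x$, so one must be careful that $\bar x$ is a \emph{fixed point} of the map $x \mapsto \mathcal{P}_{K(x)}(x - \eta F(x))$ rather than merely a projection onto a fixed set. The resolution is that the equivalence \eqref{eq:vi-fixedpoint} is applied at the single point $\bar x$ with the set frozen at $K(\bar x)$, after which membership $\bar x \in K(\bar x)$ and the solution inequality are both supplied by the definition of $X^*$. No contraction or Lipschitz hypothesis (Assumption \ref{gamma exist}) is needed for this lemma; those enter only later in the convergence analysis.
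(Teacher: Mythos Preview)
Your proposal is correct and follows essentially the same approach as the paper: both arguments observe that $\bar x\in X^*$ gives the variational inequality $\langle F(\bar x),y-\bar x\rangle\ge 0$ over $K(\bar x)$ and then identify this (via the optimality condition for projection onto a convex set) with the fixed-point equation $\bar x=\mathcal{P}_{K(\bar x)}(\bar x-\eta F(\bar x))$. The paper simply asserts this standard equivalence in one line, whereas you spell out both directions using Lemma~\ref{lem1}(b); only the forward direction is actually needed for the lemma, but your extra detail is harmless.
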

\begin{proof}
Note that $\bar x\in X^*$ implies that for any $x\in K(\bar x)$ we have that $\langle F(\bar x), x-\bar x \rangle \geq 0$. Multiplying both sides of the last inequality by $\eta>0$ we obtain that for any $x\in K(\bar x)$, $\langle \eta F(\bar x), x-\bar x \rangle \geq 0$ which due to convexity of set $K(\bar x)$ is equivalent to $\bar x=\mbox{argmin}_{x\in K(\bar x)}\|x-(\bar x-\eta F(\bar x))\|^2=\mathcal{P}_{K(\bar x)}(\bar x - \eta F(\bar x))$. 
\end{proof}
\aj{Before stating our main results, we define a few notations to facilitate the rate results.}
\z{
\begin{definition}
    At each iteration $k\geq 0$, \aj{we define the error of sample operator } $F$ as $$\bar w_{k,N_k} \triangleq \tfrac{1}{N_k}{\sum_{j=1}^{N_k} ( G(x_k,\xi_{j,k})-F(x_k))}\ \mbox{and} \ \bar w'_{k,N_k} \triangleq \tfrac{1}{N_k}\sum_{j=1}^{N_k} ( G(u_k,\xi_{j,k})- F(u_k)).$$ \aj{ Moreover, the error of approximating the projection is defined by} $$e_k\triangleq d_k- \mathcal P_{K(x_k)}\left(x_k-\eta \frac{\sum_{j=1}^{N_k}G(x_k,\xi_{j,k})}{N_k} \right)\ \mbox{and}$$   $$e'_k\triangleq s_k- \mathcal P_{K(u_k)}\left(u_k-\eta \frac{\sum_{j=1}^{N_k}G(u_k,\xi_{j,k})}{N_k} \right)$$ \aj{in step (1) and (3) of Algorithm \ref{alg2}, respectively.} 
    
\end{definition}
}
\aj{In the next theorem, we establish a bound on the expected solution error, which is expressed in terms of errors associated with the sample operator and the projection approximations. Subsequently, in Corollary \ref{corr alg2}, we provide the rate and complexity statements for Algorithm \ref{alg2}.}
\begin{theorem} \label{Quad eq:rate-general}
\aj{Let $\{x_k\}_{k\geq 0}$ be the iterates generated by Algorithm \ref{alg2} using step-size $\eta>0$ satisfying $|\eta-\tfrac{\mu_F}{L^2}|<{\frac{\sqrt {\mu_F^2-L^2(2\gamma-\gamma^2)}}{L^2}}$ and retraction parameters $\alpha_k=\bar\alpha\in (0,1)$ and $b_k=\bar b\in (0,\frac{1}{1-\beta})$ for $k\geq 0$, where $\beta\triangleq\gamma+\sqrt{1+L^2\eta^2-2\eta \mu_F}$. Suppose Assumptions \ref{lipschitz}-\ref{gamma exist} hold and $\gamma+\sqrt{1-{\mu_F}^2/L^2}<1$, then for any $T\geq 1$ we have that}
\begin{align}
 \nonumber \| x_{T}-\bar x_T \|&\quad \leq (1-q)^{T} \|x_{0}-\bar x_0\|+
{\bar\alpha \beta \bar b \sum_{k=0}^{T-1}(1-q)^{T-k-1}\left(\|e'_k\|+\eta\|\bar w'_{k,N_k}\|\right)}\\ &\qquad+{\bar\alpha\sum_{k=0}^{T-1}(1-q)^{T-k-1}\left(\|e_k\|+\eta\|\bar w_{k,N_k}\|\right)},\label{th1 result}
\end{align}
where $q\triangleq \bar\alpha(1-\beta)(1+\beta\bar b)\in (0,1)$. 
\end{theorem}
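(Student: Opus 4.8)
The plan is to reduce \eqref{th1 result} to a single per-iteration linear recursion for the distance-to-solution $r_k\triangleq\text{dist}(x_k,X^*)=\|x_k-\bar x_k\|$ of contraction-plus-error type, and then to unroll it from $k=0$ to $T-1$. Unrolling a bound $r_{k+1}\le(1-q)r_k+\delta_k$ immediately gives $r_T\le(1-q)^T r_0+\sum_{k=0}^{T-1}(1-q)^{T-k-1}\delta_k$, which is exactly the shape of \eqref{th1 result} once $\delta_k$ is split into the step-(1) and step-(3) error contributions. The only global facts the unrolling needs are $q\in(0,1)$ (so that $1-q\in(0,1)$), verified below from the parameter ranges, and the elementary remark that, since $\bar x_k\in X^*$, one has $r_{k+1}=\|x_{k+1}-\bar x_{k+1}\|\le\|x_{k+1}-\bar x_k\|$, which lets me bound the new distance against the fixed reference $\bar x_k$.

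The heart of the proof is one inexact, noisy contraction estimate for a projected-gradient step, to be applied at each of steps (1) and (3). Fix $z\in X$, let $\bar z\triangleq\mathcal P_{X^*}(z)$, and set $p\triangleq\mathcal P_{K(z)}\big(z-\eta(F(z)+w)\big)$. Writing $\bar z=\mathcal P_{K(\bar z)}(\bar z-\eta F(\bar z))$ via Lemma \ref{lem:proj-optimal}, inserting $\mathcal P_{K(\bar z)}\big(z-\eta(F(z)+w)\big)$, and invoking Assumption \ref{gamma exist} for the change of constraint set together with nonexpansiveness (Lemma \ref{lem1}(a)) gives $\|p-\bar z\|\le\gamma\|z-\bar z\|+\|(z-\bar z)-\eta(F(z)-F(\bar z))\|+\eta\|w\|$. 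Squaring the middle norm, bounding the inner product from below by $\mu_F\|z-\bar z\|^2$ through the QG property (which applies precisely because $\bar z=\mathcal P_{X^*}(z)$) and the quadratic term by $L^2\|z-\bar z\|^2$ through Lipschitzness, yields $\|(z-\bar z)-\eta(F(z)-F(\bar z))\|\le\sqrt{1+L^2\eta^2-2\eta\mu_F}\,\|z-\bar z\|$, hence $\|p-\bar z\|\le\beta\|z-\bar z\|+\eta\|w\|$ with $\beta=\gamma+\sqrt{1+L^2\eta^2-2\eta\mu_F}$; the inexactness of $\mathcal M$ (Assumption \ref{assump:inner}) adds only $\|e\|$.

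I then chain the two applications. At step (1), with $z=x_k$, I get $\|d_k-\bar x_k\|\le\beta r_k+\eta\|\bar w_{k,N_k}\|+\|e_k\|$, and the retraction $u_k=(1-\bar b)x_k+\bar b d_k$ with the triangle inequality gives $\|u_k-\bar x_k\|\le[1-\bar b(1-\beta)]r_k+\bar b(\eta\|\bar w_{k,N_k}\|+\|e_k\|)$. At step (3), with $z=u_k$, I bound $\|s_k-\bar x_k\|\le\beta\|u_k-\bar x_k\|+\eta\|\bar w'_{k,N_k}\|+\|e'_k\|$, and finally $x_{k+1}=(1-\bar\alpha)x_k+\bar\alpha s_k$ gives $r_{k+1}\le\|x_{k+1}-\bar x_k\|\le(1-\bar\alpha)r_k+\bar\alpha\|s_k-\bar x_k\|$. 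Collecting the coefficient of $r_k$ produces $(1-\bar\alpha)+\bar\alpha\beta[1-\bar b(1-\beta)]=1-\bar\alpha(1-\beta)(1+\beta\bar b)=1-q$, exactly the stated contraction, with the error terms inheriting the factors $\bar\alpha$ and $\bar\alpha\beta\bar b$. For the parameter check: the range $|\eta-\mu_F/L^2|<\sqrt{\mu_F^2-L^2(2\gamma-\gamma^2)}/L^2$ is exactly the solution set of $L^2\eta^2-2\mu_F\eta+(2\gamma-\gamma^2)<0$, equivalently $\beta<1$, and this interval is nonempty precisely when $\gamma+\sqrt{1-\mu_F^2/L^2}<1$; moreover $\bar b<1/(1-\beta)$ forces $(1-\beta)(1+\beta\bar b)<1$, so $0<q<\bar\alpha<1$.

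The step I expect to be the main obstacle is the second application of the contraction estimate. The QG property only supplies the factor $\sqrt{1+L^2\eta^2-2\eta\mu_F}$ relative to a point's \emph{own} projection $\mathcal P_{X^*}(\cdot)$, so at step (3) it is literally valid with reference $\mathcal P_{X^*}(u_k)$, whereas telescoping the convex combinations wants the single reference $\bar x_k=\mathcal P_{X^*}(x_k)$. Reconciling these is the delicate point; I would handle it by running the entire per-iteration chain on the functional $\text{dist}(\cdot,X^*)$ itself---using $\text{dist}(d_k,X^*)\le\|d_k-\bar x_k\|$ and $\text{dist}(s_k,X^*)\le\|s_k-\mathcal P_{X^*}(u_k)\|$ so that each QG invocation is anchored at the correct projection---and then passing the retractions through the behaviour of $\text{dist}(\cdot,X^*)$ along the convex combinations. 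A minor technicality to state carefully is that Assumption \ref{gamma exist} is phrased for arguments in $X$, so its use at the shifted point $z-\eta(F(z)+w)$ needs either an extension to $\mathbb R^n$ or a preliminary projection.
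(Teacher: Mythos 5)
Your proposal follows essentially the same route as the paper's proof: the same fixed-point characterization of $\bar x_k$ via Lemma \ref{lem:proj-optimal}, the same insertion of an intermediate projection term combined with Assumption \ref{gamma exist} and nonexpansiveness, the same $\sqrt{1+L^2\eta^2-2\eta\mu_F}$ bound on the displacement term, the same coefficient computation $(1-\bar\alpha)+\bar\alpha\beta\bigl(1-\bar b(1-\beta)\bigr)=1-q$, and the same unrolling and parameter verification. The one substantive point of divergence is the ``delicate point'' you flag at step (3): it is real, and the paper's own proof simply applies the QG inequality to the pair $(u_k,\bar x_k)$ even though $\bar x_k=\mathcal P_{X^*}(x_k)$ is not the projection of $u_k$ onto $X^*$, whereas your workaround---anchoring each QG invocation at the stepped point's own projection and propagating $\mathrm{dist}(\cdot,X^*)$ through the convex combinations---closes this gap, at the (mild) price of needing $X^*$ closed and convex so that $\mathrm{dist}(\cdot,X^*)$ is convex along the retraction segments, a property the paper already uses implicitly in treating $\mathcal P_{X^*}$ as single-valued. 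Your remark that Assumption \ref{gamma exist} is stated only for arguments in $X$ while it is invoked at the shifted point $u_k-\eta(F(u_k)+\bar w_{k,N_k})$ applies equally to the paper's argument.
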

\begin{proof}
    
\aj{For any $k\geq 0$, we define $\bar x_k \in \mathcal{P}_{X^*}(x_k)$ where $X^*$ denotes the set of solutions of problem \eqref{prob:SQVI}. From Lemma \ref{lem:proj-optimal} we conclude that $\bar x_k= \mathcal{P}_{K(\bar x_k)}[\bar x_k-\eta F(\bar x_k)]$.} Using the update rule of $x_{k+1}$ in Algorithm \ref{alg2} and the fact that $e_k$ denotes the error of computing
the projection operator, we obtain the following.  
\begin{align}\label{Quadratic:b1}
\|x_{k+1}-\bar x_k\|&=\|(1-\alpha_k)x_k+\alpha_k \mathcal{P}_{K(u_k)}\left[u_k-\eta(F(u_k)+\bar w_{k,N_k})\right]\\
\nonumber &\quad+{\alpha_k e_k}-(1-\alpha_k)\bar x_k-\alpha_k\mathcal{P}_{K(\bar x_k)}\left[\bar x_k-\eta F(\bar x_k)\right]\| \\
\nonumber \quad&=\|(1-\alpha_k)x_k+\alpha_k \mathcal{P}_{K(u_k)}\left[u_k-\eta(F(u_k)+\bar w_{k,N_k})\right]\\
\nonumber &\quad+{\alpha_k e_k}-(1-\alpha_k)\bar x_k-\alpha_k\mathcal{P}_{K(\bar x_k)}\left[\bar x_k-\eta F(\bar x_k)\right]\\
\nonumber &\quad \pm \alpha_k \mathcal{P}_{K(x_k)}\left[u_k-\eta(F(u_k)+\bar w_{k,N_k})\right]\| \\
\nonumber&\leq (1-\alpha_k)\|x_k-\bar x_k\|+\alpha_k\|\mathcal{P}_{K(u_k)}\left[u_k-\eta(F(u_k)+\bar w_{k,N_k})\right]\\
\nonumber&\qquad-\mathcal{P}_{K(\bar x_k)}\left[u_k-\eta(F(u_k)+\bar w_{k,N_k})\right]\| \\
\nonumber&\quad +\alpha_k\|\mathcal{P}_{K(\bar x_k)}\left[u_k-\eta(F(u_k)+\bar w_{k,N_k})\right]\\
\nonumber&\qquad-\mathcal{P}_{K(\bar x_k)}\left[\bar x_k-\eta F(\bar x_k)\right]\|+{\alpha_k\|e_k\|}\\
\nonumber&\leq (1-\alpha_k)\|(x_k-\bar x_k)\|+\alpha_k\gamma \|u_k-\bar x_k\|\\
 &\nonumber\quad+\alpha_k\underbrace{\|u_k-\bar x_k-\eta(F(u_k)-F(\bar x_k))\|}_{\text{term (a)}}+\alpha_k\eta\|\bar w_{k,N_k}\|+{\alpha_k\|e_k\|},
\end{align}
\aj{where the first inequality follows from the triangle inequality, and in the last inequality, we used Lemma \ref{lem1}-(a) and Assumption \ref{gamma exist}.
Next, we provide an upper bound for the term (a) in \eqref{Quadratic:b1} by using Definition \ref{Quadratic growth def} and Lipschitz continuity of operator $F$ as follows} 
\begin{align}\label{Quadratic bound_term_a}
(\text{term (a)})^2
\nonumber&=\|u_k-\bar x_k\|^2+\eta^2\|F(u_k)-F(\bar x_k)\|^2\\ \nonumber&\quad-2\eta\langle u_k-\bar x_k,F(u_k)-F(\bar x_k)\rangle\\
\nonumber&\leq (1+L^2\eta^2-2\eta \mu_F)\|u_k-\bar x_k\|^2\\ 
\implies \text{term(a)}&\leq \sqrt{1+L^2\eta^2-2\eta \mu_F}\|u_k-\bar x_k\|.
\end{align}
\aj{Combining \eqref{Quadratic:b1} and \eqref{Quadratic bound_term_a}, and defining $\beta\triangleq\gamma+\sqrt{1+L^2\eta^2-2\eta \mu_F}$ we obtain} 
\begin{align} \label{Quad rate_x}
\|x_{k+1}-\bar x_k\| &\leq (1-\alpha_k)\|x_k-\bar x_k\|+\alpha_k \beta \|u_k-\bar x_k\|+\alpha_k\eta\|\bar w_{k,N_k}\| \zal{+\alpha_k\|e_k\|.}
\end{align}
\aj{Next, we turn our attention to providing an upper bound for $\|u_k-\bar x_k\|$. In particular, using the update of} $u_k$ in \aj{Algorithm} \ref{alg2} \aj{by taking similar steps as \eqref{Quadratic:b1} and \eqref{Quadratic bound_term_a}, one can obtain:}
\begin{align}
\nonumber\|u_{k}-\bar x_k\|&=\|(1-b_k)x_k+b_k \mathcal{P}_{K(x_k)}\left[x_k-\eta(F(x_k)+\bar w'_{k,N_k})\right]\\
\nonumber &\quad +{b_k e'_k} -(1-b_k)\bar x_k-b_k\mathcal{P}_{K(\bar x_k)}\left[\bar x_k-\eta F(\bar x_k)\right]\| \\
\nonumber &\leq (1-b_k)\|x_k-\bar x_k\|+b_k \beta \|x_k-\bar x_k\|+b_k\eta\|\bar w'_{k,N_k}\| \zal{+b_k\|e'_k\|}\\
\nonumber&= (1-b_k(1-\beta))\|x_k-\bar x_k\|+b_k\eta\|\bar w'_{k,N_k}\| \zal{+b_k\|e'_k\|}.
\end{align}
Replacing the above inequality in \eqref{Quad rate_x}, and \aj{defining} $q_i\triangleq \alpha_i (1-\beta)(1\z{+}\beta b_i)$ we conclude that 
\begin{align*}
\nonumber\|x_{k+1}-\bar x_k\|&\leq  (1-\alpha_k)\|x_k-\bar x_k\|+\alpha_k \beta( (1-b_k(1-\beta))\|x_k-\bar x_k\|\\
\nonumber&\quad+b_k\eta\|\bar w'_{k,N_k}\|\zal{+b_k\|e'_k\|} )+\alpha_k\eta\|\bar w_{k,N_k}\| \zal{+\alpha_k\|e_k\|}\\
\nonumber &= (1-\alpha_k (1-\beta)(1\z{+}\beta b_k))\|x_k-\bar x_k\|+\alpha_k\eta \beta b_k \|\bar w'_{k,N_k}\|\\
\nonumber &\quad\zal{+\alpha_k \beta b_k\|e'_k\|}+\alpha_k\eta\|\bar w_{k,N_k}\| \zal{+\alpha_k\|e_k\|}.
\end{align*}

\aj{Now, from the fact that $\bar x_{k+1}=\mathcal{P}_{X^*}(x_{k+1})$ one can conclude that $\|x_{k+1}-\bar x_{k+1}\|\leq \|x_{k+1}-\bar x_k\|$. Therefore, for any $k\geq 0$}
\begin{align*}
\nonumber\|x_{k+1}-\bar x_{k+1}\| &\leq  \prod_{i=0}^k (1-q_{i})\|x_{0}-\bar x_0 \| \\
\nonumber &\quad+ \sum_{i=0}^{k}\left(\left(\prod_{j=i}^{k-1}(1-q_{j})\right) \alpha_{i} \beta b_i \left(\eta \|\bar{w'}_{i,N_{i}}\|+\|e'_i\|\right)\right)\\
&\quad+ \sum_{i=0}^{k}\left(\left(\prod_{j=i}^{k-1}(1-q_{j})\right) \alpha_{i} \left(\eta \|\bar{w}_{i,N_{i}}\|+\|e_i\|\right)\right),
\end{align*}
\aj{where we assume that the product is 1 when there are no terms in the multiplication, i.e.,  $\prod_{j=i}^{k-1}(1-q_{j+1})=1$ if $i>k-1$.}

\aj{From the condition of $\eta$, we  have that $\beta<1$. Moreover, choosing $b_k =\bar b < \tfrac{1}{1-\beta}$ and $\alpha_k = \bar \alpha < 1$ one can readily verify that $q_k=q=\bar \alpha (1-\beta)(1+\beta \bar b)<1$ for all $k\geq 0$. Therefore, the result immediately follows by using the fact that $\prod_{j=i}^{k-1}(1-q)=(1-q)^{k-i}. $} 
\end{proof}
\aj{In the next corollary, we consider increasing the sample size at each iteration to demonstrate a linear convergence rate and obtain the oracle complexity.}
\begin{corollary}[Increasing sample-size]\label{corr alg2}
Under the premises of Theorem \ref{Quad eq:rate-general}, by selecting the number of inner steps for algorithm $\mathcal M$ as $t_k=\tfrac{({k+1})\log^2(k+2)}{\rho^{k}}$ and choosing the number of sample sizes at iteration $k$ as $N_k=\lceil \rho^{-2k}\rceil$ where $\rho>1-q$, we obtain the following results.

\textbf{(i)} \z{For} any $T\geq 1$, $\mathbb E[\|x_{T}-\z{\bar x_T}\|]\leq \mathcal O(\rho^{T}).$

{\textbf(ii)} \aj{An $\epsilon$-solution $x_T$, i.e., $\mathbb E[|x_{T}-\bar x_T|]\leq \epsilon$, can be achieved within $T=\mathcal O(\log(1/\epsilon))$ iterations which requires $\sum_{k=0}^{T-1} N_k \geq \mathcal O(1/\epsilon^2)$ sample operator evaluations and $\sum_{k=0}^{T-1} t_k = \mathcal O(\frac{1}{\epsilon}\log(1/\epsilon))$ number of total inner iterations.}
\end{corollary}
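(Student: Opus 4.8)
The plan is to start from the non-asymptotic bound \eqref{th1 result} established in Theorem \ref{Quad eq:rate-general} and take total expectations, so that the problem reduces to controlling the expected magnitudes of the four error sequences $\{\|e_k\|\}$, $\{\|e'_k\|\}$, $\{\|\bar w_{k,N_k}\|\}$, and $\{\|\bar w'_{k,N_k}\|\}$ that appear in the two weighted sums. Because the weights $(1-q)^{T-k-1}$ and the constants $\bar\alpha,\beta,\bar b$ are nonnegative and deterministic, expectation passes through the sums, and it suffices to produce a per-index bound of the form $\mathbb E[\cdot]=\mathcal O(\rho^{k})$ for each error term.

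First I would bound the stochastic (sampling) errors. By the tower property and Jensen's inequality (concavity of the square root), together with Assumption \ref{assump_error},
\begin{align*}
\mathbb E[\|\bar w_{k,N_k}\|]=\mathbb E\!\left[\mathbb E[\|\bar w_{k,N_k}\|\mid\mathcal F_k]\right]\leq \mathbb E\!\left[\sqrt{\mathbb E[\|\bar w_{k,N_k}\|^2\mid\mathcal F_k]}\right]\leq \frac{\nu}{\sqrt{N_k}},
\end{align*}
and the analogous bound holds for $\bar w'_{k,N_k}$. Since $N_k=\lceil\rho^{-2k}\rceil\geq\rho^{-2k}$, this yields $\mathbb E[\|\bar w_{k,N_k}\|]\leq \nu\rho^{k}$. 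For the projection (inexactness) errors, Assumption \ref{assump:inner} gives $\|e_k\|\leq\sqrt{C}/t_k$ almost surely, and with the prescribed $t_k=(k+1)\log^2(k+2)/\rho^{k}$ we obtain $\mathbb E[\|e_k\|]\leq \sqrt{C}\,\rho^{k}/\big((k+1)\log^2(k+2)\big)\leq \sqrt{C}\,\rho^{k}$, and similarly for $e'_k$. Thus every error term decays at rate $\mathcal O(\rho^{k})$.

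The crux of the argument is then the geometric-sum estimate. Substituting the per-index bounds into \eqref{th1 result} produces sums of the form $\sum_{k=0}^{T-1}(1-q)^{T-k-1}\rho^{k}$, which I would rewrite as $(1-q)^{T-1}\sum_{k=0}^{T-1}\big(\tfrac{\rho}{1-q}\big)^{k}$. This is exactly where the standing condition $\rho>1-q$ is essential: the ratio $\rho/(1-q)>1$ makes the series geometric with a growing ratio, so it is dominated by its final term and equals $\mathcal O\big((\rho/(1-q))^{T}\big)$; multiplying by $(1-q)^{T-1}$ then collapses the whole expression to $\mathcal O(\rho^{T})$. Combined with the fact that the homogeneous term $(1-q)^{T}\|x_0-\bar x_0\|$ is also $\mathcal O(\rho^{T})$ (because $1-q<\rho<1$), this establishes part (i). The main obstacle to watch is precisely this balancing: the schedules must be tuned so the errors decay no faster than the target rate $\rho^{k}$ (else effort is wasted) yet fast enough that their accumulated, contraction-weighted contribution does not exceed $\mathcal O(\rho^{T})$; the choices $N_k=\lceil\rho^{-2k}\rceil$ and $t_k\propto\rho^{-k}$ are what make the two sides match.

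Finally, for part (ii) I would invert the rate: requiring $\mathcal O(\rho^{T})\leq\epsilon$ and solving for $T$ gives $T=\mathcal O(\log(1/\epsilon)/\log(1/\rho))=\mathcal O(\log(1/\epsilon))$, since $\rho<1$. The sample complexity follows from summing the batch sizes,
\begin{align*}
\sum_{k=0}^{T-1}N_k\leq \sum_{k=0}^{T-1}\big(\rho^{-2k}+1\big)=\mathcal O\big(\rho^{-2T}\big)+T=\mathcal O(1/\epsilon^2),
\end{align*}
using $\rho^{-2T}=\mathcal O(1/\epsilon^2)$ at the stopping index. Likewise, the total number of inner iterations $\sum_{k=0}^{T-1}t_k$ is a sum with ratio $\rho^{-1}>1$, again dominated by its last term $t_{T-1}=\mathcal O\big(T\log^2(T)\,\rho^{-T}\big)$; substituting $T=\mathcal O(\log(1/\epsilon))$ and $\rho^{-T}=\mathcal O(1/\epsilon)$ gives $\sum_{k=0}^{T-1}t_k=\mathcal O\big(\tfrac{1}{\epsilon}\log(1/\epsilon)\big)$ up to lower-order $\log\log$ factors, which completes the complexity claims.
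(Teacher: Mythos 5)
Your proposal is correct and follows essentially the same route as the paper: take expectations of the Theorem~\ref{Quad eq:rate-general} bound, control the sampling errors by $\nu/\sqrt{N_k}\leq\nu\rho^k$ via Jensen and Assumption~\ref{assump_error}, control the projection errors via Assumption~\ref{assump:inner} with the prescribed $t_k$, and evaluate the resulting geometric sums using $\rho>1-q$ before inverting the rate for the complexity counts. The only cosmetic differences are that you absorb the $(k+1)\log^2(k+2)$ factor and treat the $e_k$-sum as another geometric series (the paper instead keeps it and invokes convergence of $\sum_k \tfrac{1}{(k+1)\log^2(k+2)}\leq D$), and you report an upper bound on $\sum_k N_k$ where the paper states a lower bound; neither affects correctness.
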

\begin{proof}
    
{\textbf (i)} \aj{Taking expectation from both {sides} of \eqref{th1 result}, choosing $N_k=\lceil \rho^{-2k}\rceil$, and using Assumption \ref{assump_error}, one can obtain:}
\begin{align*}
 &\mathbb E\left[ \| x_{T}-\z{\bar x_T }\|\right] \\
\nonumber&\leq (1-q)^{T} \|x_{0}-\z{\bar x_0}\| \aj{ + \bar{\alpha} \beta \bar b (1-q)^{T-1}\sum_{k=0}^{T-1}\left(\eta\nu' (\frac{\rho}{1-q})^k  +\mathbb E[\|e'_k\|](1-q)^{-k}\right)} \\
\nonumber & \aj{\quad+ \bar{\alpha} (1-q)^{T-1}\sum_{k=0}^{T-1}\left(\eta\nu (\frac{\rho}{1-q})^k  +\mathbb E[\|e_k\|](1-q)^{-k}\right).}
\end{align*}
\aj{ Using the fact that $ \sum_{\ze{k}=0}^{T-1} (\z{\tfrac{\rho}{1-q}})^{\ze{k}} = \tfrac{1-(\tfrac{\rho}{1-q})^{T}}{1-\tfrac{\rho}{1-q}} $, we conclude that }
\begin{align*}
\mathbb E\left[ \| x_T-\z{\bar x_T} \|\right] 
& \leq \z{(1-q)}^{T} \|x_{0}-\z{\bar x_0}\|+ \bar{\alpha} \beta \bar b \eta \nu'  \frac{ \rho^{T}-(1-q)^{T}}{\rho+q-1} \\
&\quad +{\bar\alpha \beta \bar b (1-q)^{T-1} \sum_{k=0}^{\ze{T-1}}\left(\z{\mathbb E[\|e'_k\|]}\z{(1-q)}^{-k}\right)} \\
&\quad + \bar{\alpha} \eta \nu \frac{ \rho^{T}-(1-q)^{T}}{\rho+q-1} +{\bar\alpha (1-q)^{T-1}\sum_{k=0}^{\ze{T-1}}\left(\z{\mathbb E[\|e_k\|]}\z{(1-q)}^{-k}\right).}
\end{align*}
\z{Since $\rho \geq 1-q$ and $q\in (0,1)$, \aj{one can easily confirm that $-\frac{(1-q)^{T}}{\rho+q-1}<0$, hence} the following holds.}
\begin{align*}
&\mathbb E\left[ \| x_T-\z{\bar x_T} \|\right] \\
& \leq \z{(1-q)}^{T} \|x_{0}-\z{\bar x_0}\|+   \frac{ \bar{\alpha} \beta \bar b \eta \nu'\rho^{T}}{\rho+q-1}
+{\bar\alpha \beta \bar b \sum_{k=0}^{\ze{T-1}}\left(\mathbb E[\|e'_k\|]\aj{(1-q)^{T-1-k}}\right)}\\
&\quad +\frac{ \bar{\alpha} \eta \nu \rho^{T}}{\rho+q-1}   +{\bar\alpha \sum_{k=0}^{\ze{T-1}}\left(\mathbb E[\|e_k\|]\aj{(1-q)^{T-1-k}}\right)}\\
&\leq \rho^{T} \|x_{0}-\z{\bar x_0}\|+   \frac{ \bar{\alpha} \beta \bar b \eta \nu'\rho^{T}}{\rho+q-1}  +\bar\alpha \beta \bar b \sum_{k=0}^{\ze{T-1}}\left(\mathbb E[\|e'_k\|]\aj{\rho^{T-1-k}}\right) \\
&\quad+ \frac{ \bar{\alpha} \eta \nu \rho^{T}}{\rho+q-1} +\bar\alpha \sum_{k=0}^{\ze{T-1}}\left(\mathbb E[\|e_k\|]\rho^{T-1-k}\right).
\end{align*}
\z{According to the Assumption \ref{assump:inner}, Algorithm \z{$\mathcal M$} has a convergence rate of $C/t_k^2$ within $t_k$ inner steps. By selecting $t_k=\tfrac{({k+1})\log^2(k+2)}{\rho^{k}}$, we have that $\mathbb E[ \|{e_k}\|]\leq \tfrac{C}{t_k}=\tfrac{C\rho^{k}}{(k+1)\log^2(k+2)}$ and $\mathbb E[ \|{e'_k}\|]\leq \tfrac{C'\rho^{k}}{(k+1)\log^2(k+2)}$. These upper bounds are independent of $x_k$, so by using the tower property of expectation in the previous inequality, we obtain the following. }
\begin{align*}
\mathbb E\left[ \| x_T-\z{\bar x_T} \|\right] &\leq
  \rho^{T} \|x_{0}-\bar x_0\|+   \frac{ \bar{\alpha} \beta \bar b \eta \nu'\rho^{T}}{\rho+q-1}  +\bar\alpha \beta \bar b C' \aj{\rho^{T-1}} \sum_{k=0}^{\ze{T-1}}\z{\tfrac{1}{(k+1)\log^2(k+2)}} \\
&\quad+ \frac{ \bar{\alpha} \eta \nu \rho^{T}}{\rho+q-1} +\bar\alpha C \aj{\rho^{T-1}}\sum_{k=0}^{\ze{T-1}} \z{\tfrac{1}{(k+1)\log^2(k+2)}}.
\end{align*}
\z{By applying the Cauchy-Schwarz inequality, using the fact that $ D\triangleq\sum_{\aj{k=0}}^{\aj{\infty}}\break  \tfrac{1}{(k+1)\log^2(k+2)}\aj{\leq 3.39}$ and rearranging the terms, the desired result is obtained}:

\begin{align}\label{corr1 part i}
  \mathbb E\left[ \| x_T-\z{\bar x_T} \|\right] \leq \z{\rho}^{T} \|x_{0}-\z{\bar x_0}\|+\z{\rho^T \bar \alpha \eta(\tfrac{ \beta \bar b \nu'+\nu}{\rho+q-1})+\rho^{T-1} \bar \alpha D (\beta\bar b C'+C).} 
\end{align}
{\textbf (ii)} To compute an $\epsilon$-solution, i.e., $\mathbb E[\|x_{T}-\bar x_T\|]\leq \epsilon$, it follows from \eqref{corr1 part i} that $T= \log_{1/\rho}(\bar D/\epsilon)$ iterations is required, where $\bar D=  \|x_{0}-\z{\bar x_0}\| \break +\z{ \bar \alpha \eta(\tfrac{ \beta \bar b \nu'+\nu}{\rho+q-1})+  \tfrac{ \bar \alpha D (\beta\bar b C'+C)}{\rho}}    $. 
Moreover, in Algorithm \ref{alg2}, each iteration requires taking $t_k=\tfrac{({k+1})\log^2(k+2)}{\rho^{k}}$ inner steps of Algorithm $\mathcal M$. Therefore, the total number of inner iterations is
\begin{align*}\sum_{k=0}^{T-1} t_k=\sum_{k=0}^{T-1} \tfrac{({k+1})\log^2(k+2)}{\rho^{k}}&\leq {T}\log^2(T+1)\tfrac{(1/\rho)^{T}}{1/\rho-1}=\log_{1/\rho}{\bar D/\epsilon}.\end{align*}
Furthermore, the total number of sample operator evaluations can be obtained as follows: 
$$\sum_{k=0}^{T-1} N_k=\sum_{k=0}^{T-1}\lceil \rho^{-2k}\rceil\geq \frac{\rho^2}{1-\rho^2}\left(\frac{\bar D^2}{\epsilon^2}-1\right).$$  
\end{proof}
\aj{\begin{remark}
    The error bound derived in Theorem \ref{Quad eq:rate-general} and Corollary \ref{corr alg2} signify convergence rates concerning the error associated with the projection operator. To be specific, in Corollary \ref{corr alg2}, we characterized how quickly this error must decrease to ensure linear convergence. Conversely, in cases where the projection onto the constraint set is straightforward to compute, i.e., when $e_k=e_k'=0$ for all $k \geq 0$, and under the assumptions of Theorem \ref{Quad eq:rate-general} the expectation of solution error will be bound as follows:
        \begin{align*}
\| x_{T}-\bar x_T \|  &\leq (1-q)^{T} \|x_{0}-\bar x_0\|\\
&\quad+
{\bar\alpha \beta \bar b\eta \sum_{k=0}^{T-1}(1-q)^{T-k-1}\|\bar w'_{k,N_k}\|}+{\bar\alpha\eta\sum_{k=0}^{T-1}(1-q)^{T-k-1}\|\bar w_{k,N_k}\|}.
\end{align*}
    By choosing $N_k=\lceil \rho^{-2k}\rceil$ where $\rho>1-q$, Algorithm \ref{alg2} achieves a linear convergence rate, i.e., $\mathbb E[\|x_{T}-\z{\bar x_T}\|]\leq \mathcal O(\rho^{T})$.
\end{remark}}
\aj{Next, let's examine a scenario in which a constant (mini) batch-size sampling operator of $F$ is accessible during each iteration $k\geq 0$. This configuration arises in various contexts, including online optimization, where problem information becomes progressively available over time. (See Appendix Section~\ref{sec:extragrad} for the proof.)}

\begin{corollary}[Constant mini-batch]\label{corr2 alg2}Under the premises of Theorem \ref{Quad eq:rate-general} by choosing $t_k=\tfrac{({k+1})\log^2(k+2)}{(1-q)^k}$ and $N_k=N$, then the following results hold.

{\textbf (i)} For any $T\geq 1$
  \begin{align*}
  \mathbb E\left[ \| x_T-\z{\bar x_T} \|\right]\leq \mathcal O\left(\aj{(1-q)}^T+\frac{1}{q\sqrt N}\right).
\end{align*}

\aj{{\textbf (ii)} Let mini-batch size  $N= \mathcal O(1/(q^2\epsilon^2))$. An $\epsilon$-solution $x_T$, i.e., $\mathbb E[|x_{T}-\bar x_T|]\leq \epsilon$, can be achieved within   $T= \mathcal O(\frac{1}{q}\log(1/\epsilon))$ iterations which requires $NT=\mathcal O(\frac{1}{q^3\epsilon^2}\log(1/\epsilon))$ sample operator evaluations.}
\end{corollary}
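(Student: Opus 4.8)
The plan is to mirror the argument used for Corollary \ref{corr alg2}, but to exploit the constant batch size $N_k=N$ so that the geometric weights $(1-q)^{T-k-1}$ appearing in \eqref{th1 result} are what produce the $\tfrac{1}{q}$ prefactor in the statement. First I would take expectations in \eqref{th1 result} (conditioning through the tower property, since the error bounds below are deterministic) and split the right-hand side into three groups: the deterministic decay term $(1-q)^T\|x_0-\bar x_0\|$, the stochastic-error sums involving $\mathbb E[\|\bar w_{k,N}\|]$ and $\mathbb E[\|\bar w'_{k,N}\|]$, and the projection-error sums involving $\mathbb E[\|e_k\|]$ and $\mathbb E[\|e'_k\|]$.

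For the stochastic terms, Jensen's inequality together with Assumption \ref{assump_error} gives $\mathbb E[\|\bar w_{k,N}\|]\leq \sqrt{\mathbb E[\|\bar w_{k,N}\|^2]}\leq \tfrac{\nu}{\sqrt N}$, and likewise for the primed quantity. Since this bound no longer decays in $k$ (unlike the increasing-batch regime of Corollary \ref{corr alg2}), I would factor it out and use the geometric-sum estimate $\sum_{k=0}^{T-1}(1-q)^{T-k-1}=\tfrac{1-(1-q)^T}{q}\leq \tfrac{1}{q}$. This is exactly where the $\tfrac{1}{q\sqrt N}$ term is produced, and it is the one structural difference from the increasing-batch proof.

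For the projection terms, the choice $t_k=\tfrac{(k+1)\log^2(k+2)}{(1-q)^k}$ is engineered so that Assumption \ref{assump:inner} yields $\mathbb E[\|e_k\|]\leq \tfrac{C}{t_k}=\tfrac{C(1-q)^k}{(k+1)\log^2(k+2)}$; the factor $(1-q)^k$ then cancels against the weight $(1-q)^{T-k-1}$, leaving $(1-q)^{T-1}\sum_{k=0}^{T-1}\tfrac{1}{(k+1)\log^2(k+2)}$, which is bounded by $D(1-q)^{T-1}$ with $D\leq 3.39$ the same convergent constant used before. Hence both projection sums are $\mathcal O((1-q)^T)$, and combining all three groups gives part (i), namely $\mathbb E[\|x_T-\bar x_T\|]\leq \mathcal O\!\left((1-q)^T+\tfrac{1}{q\sqrt N}\right)$.

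For part (ii) I would balance the two terms against a target $\epsilon$. Forcing $(1-q)^T\leq \epsilon$ requires $T\geq \tfrac{\log(1/\epsilon)}{\log(1/(1-q))}$, and using $\log(1/(1-q))\geq q$ gives $T=\mathcal O(\tfrac{1}{q}\log(1/\epsilon))$; forcing $\tfrac{1}{q\sqrt N}\leq \epsilon$ gives $N=\mathcal O(\tfrac{1}{q^2\epsilon^2})$, so the total operator-sample budget is $NT=\mathcal O(\tfrac{1}{q^3\epsilon^2}\log(1/\epsilon))$. The main obstacle is not any single inequality but the bookkeeping that ties the three tunable sequences together: one must verify that the constant batch still keeps the projection-error contribution at the linear rate $\mathcal O((1-q)^T)$ (which hinges on the precise $(1-q)^{-k}$ scaling of $t_k$ relative to the weights) while the \emph{non-decaying} stochastic error is controlled purely through the geometric weighting, so that neither source dominates in the wrong regime of $q$ and the two rates in part (i) remain separable.
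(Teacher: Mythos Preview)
Your proposal is correct and follows essentially the same approach as the paper: take expectations in \eqref{th1 result}, bound the stochastic terms by $\nu/\sqrt{N}$ via Assumption \ref{assump_error} and sum the geometric weights to get the $1/(q\sqrt{N})$ factor, while the choice $t_k=(k+1)\log^2(k+2)/(1-q)^k$ together with Assumption \ref{assump:inner} makes the projection errors cancel against the weights and contribute $D(1-q)^{T-1}$. Your balancing argument for part (ii) is also exactly what the paper does.
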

\aj{In the following corollary, we demonstrate an improvement in the oracle complexity for the deterministic case, reducing it to 
$\mathcal O(\log(1/\epsilon))$. The proof is provided in Appendix Section~\ref{sec:extragrad} and follows a similar approach to that of Corollary \ref{corr alg2}, with $w$ and $w'$ both being zero.}
\begin{corollary}[Deterministic QVI]\label{corr3 alg2}
    Under the premises of Theorem \ref{Quad eq:rate-general} for the deterministic case, by selecting the number of inner steps for algorithm $\mathcal M$ as $t_k=\tfrac{({k+1})\log^2(k+2)}{\rho^{k}}$  where $\rho>1-q$, the following holds. \\
{\textbf (i)} For any $T\geq 1$, $\mathbb E[\|x_{T}-\z{\bar x_T}\|]\leq \mathcal O(\rho^T).$
{\textbf (ii)} To compute an $\epsilon$-solution, i.e., $\mathbb E[\|x_{T}-\bar x_T\|]\leq \epsilon$, \aj{the total number of operator calls is $\mathcal O(\log(1/\epsilon))$ and total number of  inner iterations is $\sum_{k=0}^{T-1}t_k=\mathcal O(\frac{1}{\epsilon}\log(1/\epsilon))$.} 
\end{corollary}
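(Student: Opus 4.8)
The plan is to specialize the general recursion \eqref{th1 result} of Theorem \ref{Quad eq:rate-general} to the deterministic regime, in which the operator $F$ is evaluated exactly so that the sampling-error terms vanish, i.e. $\bar w_{k,N_k}=\bar w'_{k,N_k}=0$ for all $k\geq 0$. Dropping these terms from \eqref{th1 result} leaves only the projection-approximation errors, giving
\begin{align*}
\|x_T-\bar x_T\|\leq (1-q)^T\|x_0-\bar x_0\|+\bar\alpha\beta\bar b\sum_{k=0}^{T-1}(1-q)^{T-k-1}\|e'_k\|+\bar\alpha\sum_{k=0}^{T-1}(1-q)^{T-k-1}\|e_k\|.
\end{align*}
First I would invoke Assumption \ref{assump:inner} with the prescribed inner budget $t_k=\tfrac{(k+1)\log^2(k+2)}{\rho^k}$ to bound the projection errors by $\|e_k\|\leq C/t_k$ and $\|e'_k\|\leq C'/t_k$, which recasts each summand as a constant multiple of $(1-q)^{T-k-1}\rho^k/\big((k+1)\log^2(k+2)\big)$.

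The estimate driving part (i) is the elementary inequality $(1-q)^{T-k-1}\rho^k\leq \rho^{T-1}$ for $0\leq k\leq T-1$, valid precisely because $\rho>1-q$: raising the smaller base $1-q$ to the nonnegative exponent $T-k-1$ only decreases it relative to $\rho^{T-k-1}$. Factoring out $\rho^{T-1}$ reduces both sums to $\rho^{T-1}D$, where $D\triangleq\sum_{k=0}^\infty\tfrac{1}{(k+1)\log^2(k+2)}\leq 3.39$, exactly as in the proof of Corollary \ref{corr alg2}. Combining this with $(1-q)^T\leq \rho^T$ yields
\begin{align*}
\|x_T-\bar x_T\|\leq \rho^T\|x_0-\bar x_0\|+\rho^{T-1}\bar\alpha D(\beta\bar b C'+C)=\mathcal O(\rho^T),
\end{align*}
which is part (i); the expectation is superfluous here since there is no sampling noise.

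For part (ii) I would invert the $\mathcal O(\rho^T)$ bound: writing the right-hand side as $\rho^T\bar D$ for a suitable constant $\bar D$, the requirement $\rho^T\bar D\leq\epsilon$ gives $T=\log_{1/\rho}(\bar D/\epsilon)=\mathcal O(\log(1/\epsilon))$ outer iterations. Since each outer iteration evaluates the operator a constant number of times (steps (1) and (3) of Algorithm \ref{alg2} with a single deterministic call each), the total number of operator calls is $\mathcal O(T)=\mathcal O(\log(1/\epsilon))$, which is the claimed improvement over the stochastic $\mathcal O(1/\epsilon^2)$ count. For the cumulative inner work $\sum_{k=0}^{T-1}t_k=\sum_{k=0}^{T-1}\tfrac{(k+1)\log^2(k+2)}{\rho^k}$, I would bound the polynomial–logarithmic factor by its largest value $T\log^2(T+1)$ and sum the geometric factor $\sum_{k=0}^{T-1}(1/\rho)^k\leq\tfrac{(1/\rho)^T}{1/\rho-1}$, obtaining $\sum_{k=0}^{T-1}t_k\leq T\log^2(T+1)\tfrac{(1/\rho)^T}{1/\rho-1}$; substituting $(1/\rho)^T=\mathcal O(\bar D/\epsilon)$ and $T=\mathcal O(\log(1/\epsilon))$ collapses this to $\mathcal O(\tfrac{1}{\epsilon}\log(1/\epsilon))$.

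The main obstacle is bookkeeping rather than conceptual: one must confirm that $\rho>1-q$ is genuinely what lets the $(1-q)^{T-k-1}$ factors be absorbed into $\rho^{T-1}$ (without it the series would not factor cleanly), and that the doubly-logarithmic overhead $\log^2(T+1)$ in the inner-iteration count is dominated and hence absorbed into the stated $\mathcal O(\tfrac{1}{\epsilon}\log(1/\epsilon))$ rate. No analytic machinery beyond Corollary \ref{corr alg2} is needed, since setting $\nu=\nu'=0$ throughout that argument recovers the present claim.
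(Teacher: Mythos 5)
Your proposal is correct and follows essentially the same route as the paper: set the sampling errors $\bar w_{k,N_k}=\bar w'_{k,N_k}=0$ in \eqref{th1 result}, bound the projection errors via Assumption \ref{assump:inner} with the prescribed $t_k$, absorb $(1-q)^{T-k-1}\rho^{k}$ into $\rho^{T-1}$ using $\rho>1-q$, and reuse the counting argument of Corollary \ref{corr alg2}(ii). The paper's appendix proof is exactly this specialization, so no further comparison is needed.
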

\subsection{Gradient Approach}
\aj{A natural question that arises after proposing an extra gradient method is whether one can achieve similar convergence results for a gradient method. Therefore, in this section, we shift our focus to gradient method for solving the \eqref{prob:SQVI} problem, \za{ i.e.} letting retraction parameter $b_k=0$ in Algorithm \ref{alg2}. In particular, we propose an inexact Gradient SQVI (iG-SQVI) method in Algorithm \ref{alg1}.}
\begin{algorithm}[htbp]
\caption{\z{inexact Gradient SQVI (iG-SQVI)}}
\label{alg1}
{\bf Input}: $x_0\in X$, $\eta>0$,  $T > 0$,  $\{N_k\}_k$, $\{t_k\}_k$, $\{\alpha_k\}_k$ and Algorithm $\z{\mathcal M}$ satisfying Assumption \ref{assump:inner}; \\
{\bf for $k=0,\hdots T-1$ do}\\
\mbox{(1)} Use Algorithm $\mathcal M$ with $t_k$ iterations and find an approximated solution $d_k$ of $$\min_{x\in K(x_k)}\left\|x-\left(x_k-\eta\frac{\sum_{j=1}^{N_k}G(x_k,\xi_{j,k})}{N_k}\right)\right\|^2;$$
\mbox{(2)} $x_{k+1}=(1-\alpha_k)x_k+\alpha_k d_k$;  \\
{\bf end for}\\
{\bf Output:} $x_{k+1}$\zr{.}
\end{algorithm}
\\

\aj{In the next theorem and corollary, we show the convergence results of Algorithm \ref{alg1} which is similar to the rate results of Algorithm \ref{alg2}. (See Appendix Section~\ref{sec:gradient} for the proof.)} 

\begin{theorem} \label{th: alg1}
\aj{Let $\{x_k\}_{k\geq 0}$ be the iterates generated by Algorithm \ref{alg1} using step-size $\eta>0$ satisfying $|\eta-\tfrac{\mu_F}{L^2}|<{\frac{\sqrt {\mu_F^2-L^2(2\gamma-\gamma^2)}}{L^2}}$ and retraction parameter $\alpha_k=\bar\alpha\in (0,1)$ for $k\geq 0$. Suppose Assumptions \ref{lipschitz}-\ref{gamma exist} hold and $\gamma+\sqrt{1-{\mu_F}^2/L^2}<1$, then for any $T\geq 1$ we have that}
\begin{align}
 & \| x_{T}-\bar x_T \| \leq (1-q)^{T} \|x_{0}-\bar x_0\|+{\bar\alpha\sum_{k=0}^{T-1}(1-q)^{T-k-1}\left(\|e_k\|+\eta\|\bar w_{k,N_k}\|\right)},
\label{th2 result}\end{align}
where $q\triangleq \bar\alpha(1-\beta)\in (0,1)$ and $\beta\triangleq\gamma+\sqrt{1+L^2\eta^2-2\eta \mu_F}$. 
\end{theorem}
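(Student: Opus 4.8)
The plan is to reproduce the argument behind Theorem \ref{Quad eq:rate-general}, specialized to the single‑projection update of Algorithm \ref{alg1}; since Algorithm \ref{alg1} is exactly Algorithm \ref{alg2} with the intermediate retraction suppressed, the auxiliary point $u_k$ disappears and the recursion should close directly on $\|x_k-\bar x_k\|$, which is why the contraction factor degenerates from $\bar\alpha(1-\beta)(1+\beta\bar b)$ to $q=\bar\alpha(1-\beta)$. First I would set $\bar x_k\triangleq\mathcal P_{X^*}(x_k)$ and invoke Lemma \ref{lem:proj-optimal} to record the fixed‑point identity $\bar x_k=\mathcal P_{K(\bar x_k)}[\bar x_k-\eta F(\bar x_k)]$. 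Using the definition of the projection error I would write $d_k=\mathcal P_{K(x_k)}[x_k-\eta(F(x_k)+\bar w_{k,N_k})]+e_k$, substitute it into $x_{k+1}=(1-\alpha_k)x_k+\alpha_k d_k$, and thereby express $\|x_{k+1}-\bar x_k\|$ as the norm of a convex combination whose two anchor points are the two projections just written.

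The core per‑iteration estimate would then follow by adding and subtracting $\alpha_k\mathcal P_{K(\bar x_k)}[x_k-\eta(F(x_k)+\bar w_{k,N_k})]$, exactly in the spirit of \eqref{Quadratic:b1}. The triangle inequality splits the discrepancy into a \emph{set‑change} term $\|\mathcal P_{K(x_k)}[\cdot]-\mathcal P_{K(\bar x_k)}[\cdot]\|$, bounded by $\gamma\|x_k-\bar x_k\|$ through Assumption \ref{gamma exist}, and a \emph{point‑change} term $\|\mathcal P_{K(\bar x_k)}[x_k-\eta(F(x_k)+\bar w_{k,N_k})]-\mathcal P_{K(\bar x_k)}[\bar x_k-\eta F(\bar x_k)]\|$, bounded by nonexpansiveness (Lemma \ref{lem1}-(a)) by $\|x_k-\bar x_k-\eta(F(x_k)-F(\bar x_k))\|+\eta\|\bar w_{k,N_k}\|$; the inexactness contributes the extra additive $\alpha_k\|e_k\|$. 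I would then control the surviving ``term (a)'' analogue precisely as in \eqref{Quadratic bound_term_a}: expand $\|x_k-\bar x_k-\eta(F(x_k)-F(\bar x_k))\|^2$, apply the QG property of Definition \ref{Quadratic growth def} — which is applicable exactly because $\bar x_k=\mathcal P_{X^*}(x_k)$ — to bound the inner product below by $\mu_F\|x_k-\bar x_k\|^2$, and apply $L$‑Lipschitzness (Assumption \ref{lipschitz}) to the squared‑norm term, obtaining $\sqrt{1+L^2\eta^2-2\eta\mu_F}\,\|x_k-\bar x_k\|$.

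Collecting the pieces with $\beta\triangleq\gamma+\sqrt{1+L^2\eta^2-2\eta\mu_F}$ yields the one‑step contraction $\|x_{k+1}-\bar x_k\|\leq(1-\alpha_k(1-\beta))\|x_k-\bar x_k\|+\alpha_k\eta\|\bar w_{k,N_k}\|+\alpha_k\|e_k\|$. Because $\bar x_k\in X^*$, the projection inequality $\|x_{k+1}-\bar x_{k+1}\|\leq\|x_{k+1}-\bar x_k\|$ turns this into a genuine recursion in $\|x_k-\bar x_k\|$; setting $\alpha_k=\bar\alpha$ and $q=\bar\alpha(1-\beta)$ and unrolling from $k=0$ to $T-1$ collapses the product of contraction factors into $(1-q)^{T-k-1}$ and reproduces \eqref{th2 result}. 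The one step that is more than routine bookkeeping is verifying $q\in(0,1)$, i.e.\ $\beta<1$. Here the step‑size window $|\eta-\mu_F/L^2|<\sqrt{\mu_F^2-L^2(2\gamma-\gamma^2)}/L^2$ is exactly what is needed: it is equivalent to the quadratic inequality $L^2\eta^2-2\mu_F\eta+(2\gamma-\gamma^2)<0$, which rearranges to $\sqrt{1+L^2\eta^2-2\eta\mu_F}<1-\gamma$ and hence $\beta<1$, while the side condition $\gamma+\sqrt{1-\mu_F^2/L^2}<1$ guarantees $\mu_F^2-L^2(2\gamma-\gamma^2)>0$ so that this window is nonempty. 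Combined with $\bar\alpha\in(0,1)$ this gives $q\in(0,1)$ and completes the proof.
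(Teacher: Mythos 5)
Your proposal is correct and follows essentially the same route as the paper's own proof: the same add-and-subtract of $\mathcal P_{K(\bar x_k)}[x_k-\eta(F(x_k)+\bar w_{k,N_k})]$, the same use of Assumption \ref{gamma exist} and Lemma \ref{lem1}-(a), the same QG/Lipschitz bound on term (a), and the same unrolling after the observation $\|x_{k+1}-\bar x_{k+1}\|\leq\|x_{k+1}-\bar x_k\|$. Your explicit verification that the step-size window forces $\beta<1$ (and that the side condition makes the window nonempty) is slightly more detailed than the paper, which simply asserts it.
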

\begin{corollary}\label{corr alg1}
    Under the premises of Theorem \ref{th: alg1}   by selecting the number of inner steps for algorithm $\mathcal M$ as $t_k=\tfrac{({k+1})\log^2(k+2)}{\rho^{k}}$ and choosing $N_k=\lceil \rho^{-2k}\rceil$ where $\rho>1-q$, \z{the following results can be 
obtained.}

    \textbf{(i)} For any $T\geq 1$, $\mathbb E[\|x_{T}-\z{\bar x_T}\|]\leq \mathcal O(\rho^{T}).$

{\textbf (ii)} To find an $\epsilon$-solution $x_T$, i.e., $\mathbb E[\|x_{T}-\bar x_T\|]\leq \epsilon$, the total number of sample operators and inner iterations of algorithm $\mathcal M$ are  $\sum_{k=0}^{T-1} N_k\geq \mathcal O(1/\epsilon^2)$ and $\sum_{k=0}^{T-1} t_k\leq\mathcal O(\frac{1}{\epsilon}\log(1/\epsilon))$, respectively.
\end{corollary}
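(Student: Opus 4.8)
The plan is to observe that the bound \eqref{th2 result} in Theorem \ref{th: alg1} is structurally identical to \eqref{th1 result} of Theorem \ref{Quad eq:rate-general} once the retraction step is switched off: setting $b_k=0$ removes the primed contributions $\bar w'_{k,N_k}$ and $e'_k$, leaves a single accumulated error term driven by $\|e_k\|$ and $\|\bar w_{k,N_k}\|$, and collapses $q=\bar\alpha(1-\beta)(1+\beta\bar b)$ to $q=\bar\alpha(1-\beta)$. Consequently I would not reprove any recursion but instead feed \eqref{th2 result} directly into the same telescoping/geometric estimates used for Corollary \ref{corr alg2}, with the primed terms deleted throughout.

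For part (i), I would take expectations in \eqref{th2 result} and use the tower property. Assumption \ref{assump_error} with Jensen's inequality gives $\mathbb E[\|\bar w_{k,N_k}\|]\le \sqrt{\mathbb E[\|\bar w_{k,N_k}\|^2]}\le \nu/\sqrt{N_k}$, and the choice $N_k=\lceil \rho^{-2k}\rceil\ge \rho^{-2k}$ yields $\mathbb E[\|\bar w_{k,N_k}\|]\le \nu\rho^{k}$. Assumption \ref{assump:inner} gives $\mathbb E[\|e_k\|]\le \sqrt{C}/t_k$, so the schedule $t_k=(k+1)\log^2(k+2)/\rho^{k}$ forces $\mathbb E[\|e_k\|]\le C\rho^{k}/\big((k+1)\log^2(k+2)\big)$ after renaming the constant. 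Substituting both bounds, the noise sum $\bar\alpha\eta\nu\sum_{k=0}^{T-1}(1-q)^{T-k-1}\rho^{k}$ is geometric with ratio $\rho/(1-q)>1$; summing gives $\bar\alpha\eta\nu\,(\rho^{T}-(1-q)^{T})/(\rho+q-1)\le \bar\alpha\eta\nu\,\rho^{T}/(\rho+q-1)$. For the projection-error sum I would use $(1-q)^{T-k-1}\le \rho^{T-k-1}$ to factor out $\rho^{T-1}$ and reduce the remainder to the convergent series $\sum_{k\ge 0}1/\big((k+1)\log^2(k+2)\big)=D\le 3.39$, giving a term of order $\rho^{T-1}$. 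Together with $(1-q)^{T}\le \rho^{T}$ on the initial term, all three pieces are $\mathcal O(\rho^{T})$, establishing (i).

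For part (ii), I would write the bound from (i) as $\mathbb E[\|x_T-\bar x_T\|]\le \rho^{T}\bar D$ with $\bar D$ collecting the constants, so $\rho^{T}\bar D\le\epsilon$ holds once $T\ge \log_{1/\rho}(\bar D/\epsilon)=\mathcal O(\log(1/\epsilon))$ (recall $\rho<1$). At this $T$, $\rho^{-2T}=\mathcal O(1/\epsilon^{2})$, hence the sample count $\sum_{k=0}^{T-1}N_k=\sum_{k=0}^{T-1}\lceil \rho^{-2k}\rceil\ge (\rho^{-2T}-1)/(\rho^{-2}-1)=\mathcal O(1/\epsilon^{2})$. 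For the inner work I would bound the increasing coefficient by its terminal value, $(k+1)\log^2(k+2)\le T\log^2(T+1)$, and sum the geometric $\rho^{-k}$ to get $\sum_{k=0}^{T-1}t_k\le T\log^2(T+1)\,(1/\rho)^{T}/(1/\rho-1)$; since $(1/\rho)^{T}=\mathcal O(1/\epsilon)$ and $T=\mathcal O(\log(1/\epsilon))$, this is $\mathcal O\big(\tfrac{1}{\epsilon}\log(1/\epsilon)\big)$.

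The computation carries no genuinely new obstacle beyond Corollary \ref{corr alg2}; the only point demanding care is the interplay of the two schedules against the contraction factor. The inequality $\rho>1-q$ is what makes the noise series a growing geometric sum dominated by its final term $\rho^{T}$, so the rate is governed by $\rho$ rather than the faster-decaying $(1-q)^{T}$, and the extra weight $1/\big((k+1)\log^2(k+2)\big)$ in $t_k$ is precisely what renders the accumulated projection error a convergent series in $T$ while keeping $\sum_k t_k$ at $\mathcal O(\tfrac1\epsilon\log(1/\epsilon))$. Confirming that both choices can be met simultaneously for some admissible $\rho\in(1-q,1)$ — equivalently that $q>0$, which follows from $\beta<1$ under the stated step-size window — is the crux, and it is inherited directly from the balance already established for the extra-gradient scheme.
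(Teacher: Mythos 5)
Your proposal is correct and follows essentially the same route as the paper: take expectations in \eqref{th2 result}, substitute the schedules $N_k=\lceil\rho^{-2k}\rceil$ and $t_k=(k+1)\log^2(k+2)/\rho^{k}$, split the accumulated error into a geometric noise sum with ratio $\rho/(1-q)$ bounded by $\rho^{T}/(\rho+q-1)$ and a projection-error sum controlled by the convergent series $D=\sum_{k\ge 0}\tfrac{1}{(k+1)\log^2(k+2)}$, then read off $T=\mathcal O(\log(1/\epsilon))$ and the resulting sample and inner-iteration counts. The only cosmetic difference is that you make the Jensen step $\mathbb E[\|\bar w_{k,N_k}\|]\le \nu/\sqrt{N_k}$ and the constant renaming in Assumption \ref{assump:inner} explicit, which the paper leaves implicit.
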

\begin{remark}
\aj{Here, we would like to highlight the difference between the iEG-SQVI and iG-SQVI algorithms. According to Theorems \ref{Quad eq:rate-general} and \ref{th: alg1}, both algorithms demonstrate a linear convergence rate of $(1-q)^T$, where $q=\bar\alpha(1-\beta)(1+\beta\bar b)$ for iEG-SQVI and $q=\bar\alpha(1-\beta)$ for iG-SQVI. It is evident that the extra-gradient method (iEG-SQVI) achieves a faster convergence rate due to the additional factor of $(1+\beta\bar b)$. However, it is important to note that the extra-gradient method requires twice the number of operator and (inexact) projection evaluations at each iteration. Consequently, in cases where these evaluations are costly, one might prefer implementing the gradient method (G-SQVI); otherwise, the extra-gradient method (iEG-SQVI) may be more favorable.}

\end{remark}
Note that to obtain convergence rate guarantees, we required the condition $\gamma + \sqrt{1 - \frac{\mu_F^2}{L^2}} < 1$, which is problem-dependent and commonly appears in prior works on deterministic QVI problems with convergence guarantees (e.g., see Theorem 2 and 3 in \cite{mijajlovic2019gradient} and Theorem 5 in \cite{nesterov2006solving}). Whether convergence rate guarantees—such as sublinear or linear rates—can be established without relying on this condition remains an open question, particularly due to the added complexity introduced by feasible sets that depend on the decision variable. In the next section, we show that almost sure convergence can be achieved under a weaker requirement, which becomes less restrictive and easier to satisfy. 
\subsection{Almost Sure Convergence}
To establish almost sure convergence of the iterates, we first state the following Robbins-Siegmund lemma (see Lemma 11 in \cite{robbins1971convergence}).

\begin{lemma}[Robbins-Siegmund]\label{robins sig} Suppose $\omega_t,\eta_t,\upsilon_t
$ and $\psi_t$ are nonnegative random variables that satisfy:
$$
\mathbb{E}\left[\omega_{t+1} \mid \mathcal{F}_t\right] \leq\left(1+\eta_t\right) \omega_t+\upsilon_t-\psi_t, \quad \forall t\geq0.
$$
where $\mathbb{E}\left[\omega_{t+1} \mid \mathcal{F}_t\right]$ represents the conditional expectation for the given $\{\omega_0...,\omega_t\}$, $\{\eta_0,...,\eta_t\},$ $
\{\upsilon_0,...,\upsilon_t\}
$, $\{\psi_0,...,\psi_t\}$, and $\sum_{t=0}^\infty \eta_t < \infty$, $\sum_{t=0}^\infty \upsilon_t < \infty$ , then we have:  $
\text { (i) }  \omega_t \rightarrow \omega$ almost surely, and $\text { (ii) } \sum_{t=0}^\infty \psi_t < \infty$. 
\end{lemma}

Now we are ready to show that the
iterates generated by Algorithm \ref{alg1} have at least one limit point, and every limit point is a solution of the problem \ref{prob:SQVI} almost surely. (See Appendix Section~\ref{sec:a.s.} for the proof.)
\begin{theorem}[Almost sure convergence]\label{thm:as_convergence}
Let $\{x_k\}_{k\geq 0}$ be the sequence generated by Algorithm \ref{alg1}, and
suppose Assumption \ref{lipschitz} and \ref{assump_error} hold and $\{N_k\}_k$ is an increasing sequence, such that $\sum_{k=0}^\infty \tfrac{1}{{N_k}}< \infty$. Choose $\eta= \frac{1.8\mu_F}{L^2}$, and $\gamma\leq 45(\sqrt{1+0.04\frac{\mu_F^2}{L^2}}-1)$, and $\alpha_k=\tfrac{0.01}{(k+1)^{0.5+\delta}}$, for some $\delta>0$.Then the sequence $\{x_k\}$ has at least one limit point, and every limit point is a solution of the \ref{prob:SQVI} almost surely.
\end{theorem}
Note that the upper-bound for $\gamma$ in Theorem \ref{thm:as_convergence} is greater than the upper-bound in Theorem \ref{th: alg1}, i.e., $45(\sqrt{1+0.04\frac{\mu_F^2}{L^2}}-1)>1-\sqrt{1-\frac{\mu_F^2}{L^2}}$, when $\frac{\mu_F}{L}\leq0.992$, which is the case for most real-world problems, as $\frac{\mu_F}{L}$ is typically much smaller than $1$.
\section{Numerical Experiment}\label{numeric}
\textbf{Over-parameterized Regression Game.} In a regression problem, the goal is to find a parameter vector $x \in \mathbb{R}^d$ that minimizes the loss function $\ell^{\text{tr}}(x)$ over the training dataset $D^{\text{tr}}$. Without explicit regularization, an over-parameterized regression problem exhibits multiple global minima over the training dataset, and not all optimal regression coefficients perform equally well. Introducing a secondary objective provides a principled way to select among these solutions: it guides us toward parameters that not only fit the training data but also optimize an additional desirable property. For instance, using the $\ell_1$-norm as a secondary objective favors sparse solutions, while using the 
$\ell_2$-norm selects solutions with smaller overall magnitude.

\begin{figure}[htb]
    \centering
    \includegraphics[scale=0.1]{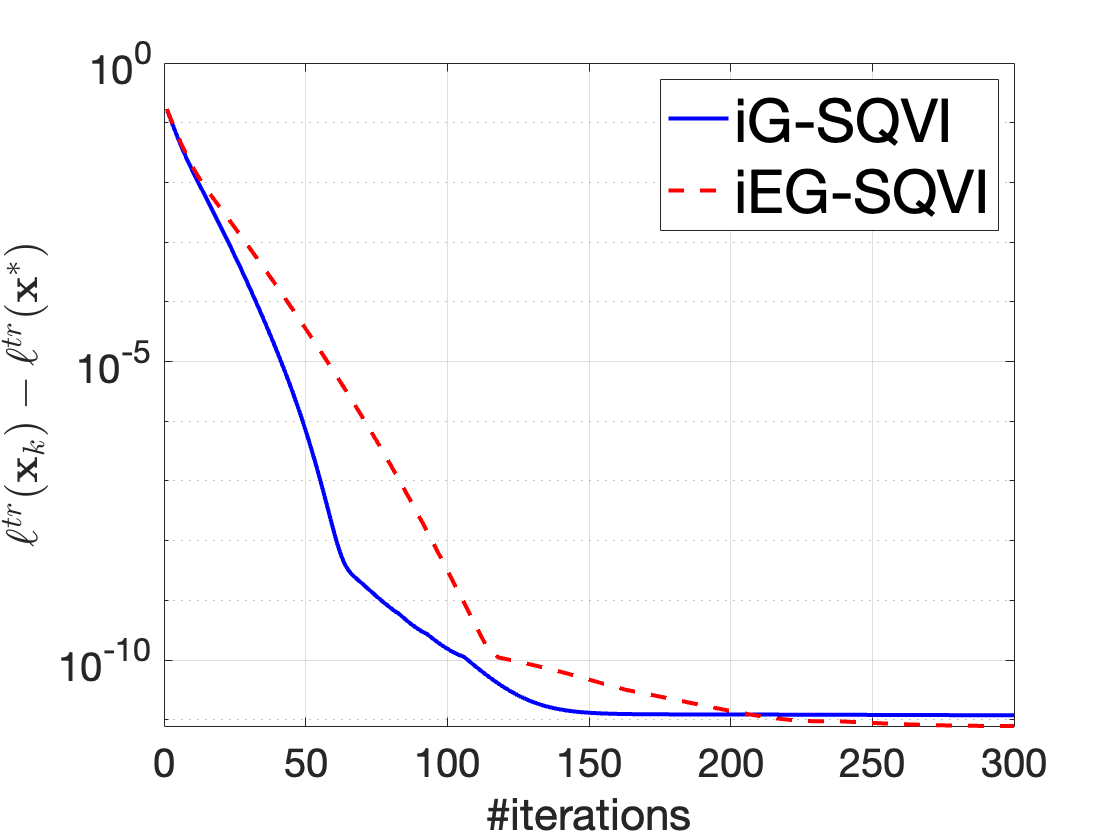}
    \includegraphics[scale=0.1]{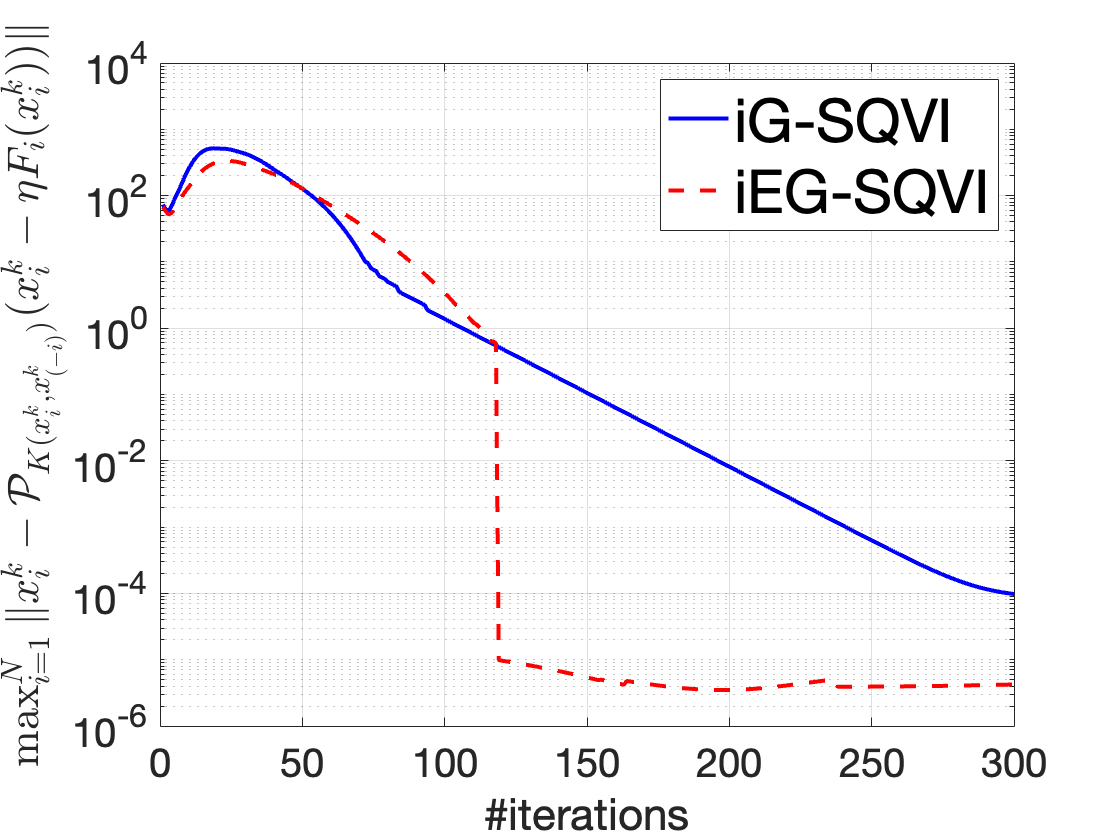}\\
    \includegraphics[scale=0.1]{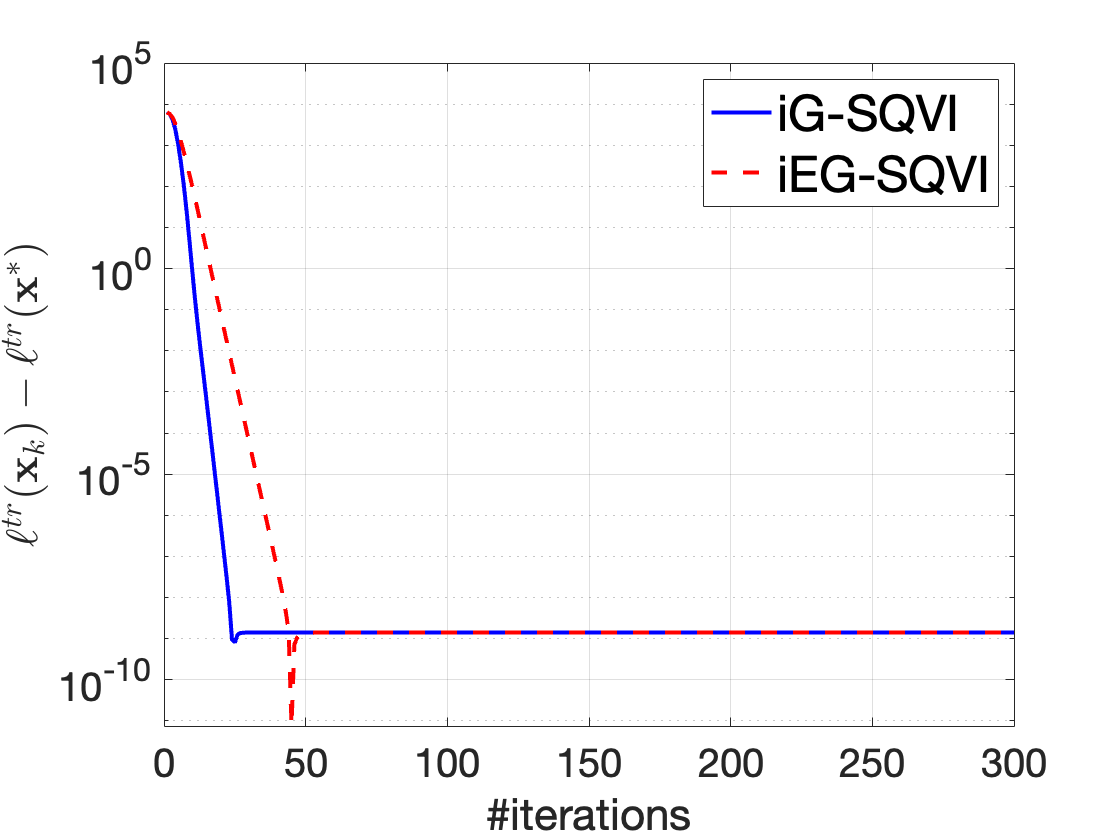}
    \includegraphics[scale=0.1]{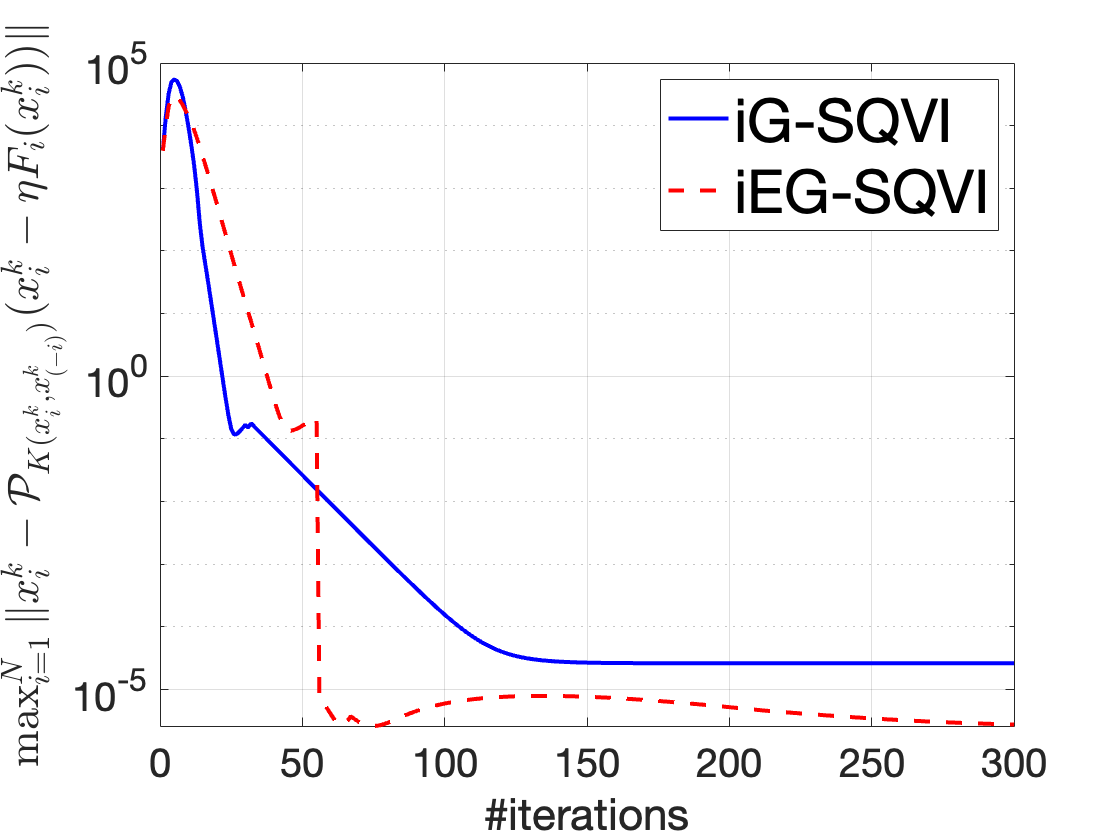}

    \caption{Comparison of iG-SQVI and iEG-SQVI: (Top) for the \texttt{wikivital mathematics} dataset with 300 data points and 1065 features, (Bottom) for a \texttt{synthetic} dataset with 60 data points and 500 features.}
    \label{fig:regression}
\end{figure}

Consider a collection of $N$ players each having a model parameter $x_i\in \mathbb R^{d_i}$. Define $\mathbf{x} \triangleq [x_i]_{i=1}^N$, and suppose there is a shared training dataset $D^{\text{tr}}$ 
and the goal is to find a model parameter $\mathbf x$ by minimizing the training loss $\ell^{\text{tr}}(\mathbf x)$ while each player improves its model parameter based on their secondary objective by minimizing a weighted $\ell_2$-norm. This problem can be formulated as the following bilevel GNE:
\begin{align*}
    &\min_{x_i\in  X_i} \ \ell^{W}_i(x_i) \quad \mbox{s.t.} \quad x_i\in \mbox{argmin}_{x_i\in \mathbb R^d} \ell^{\text{tr}}\left(x_i,x_{(-i)}^*\right).
\end{align*}
In this experiment, we define  
$\ell^{\text{tr}}(\mathbf x)\triangleq\frac{1}{2}\|A^{\text{tr}}\mathbf x-b^{\text{tr}}\|^2,$ 
where $A^{\text{tr}}\in \mathbb R^{n\times d}$, $d=\sum_{i=1}^N d_i$, $n<\min_i\{d_i\}$, $b^{\text{tr}}\in \mathbb R^{n\times 1}$, and $\ell^{W}_i(x_i)\triangleq \frac{1}{2}\|W_ix_i\|^2,$
where $W_i\in \mathbb R^{p_i\times d_i}$ is generated randomly such that $p_i<d_i$, and $X_i=\{x_i\mid \|x_i\|_{1}\leq \lambda\}$ for some $\lambda>0$. One can show that this problem can \zr{be} formulated as QVI by choosing $K(x)=\Pi_{i=1}^N K_i(x_{(-i)})$, where $$K_i(x_{(-i)})=\mbox{argmin}_{x_i\in \mathbb R^{d_i}} \frac{1}{2}\left\|A_i^{\text{tr}}x_i+A_{(-i)}^{\text{tr}}x_{(-i)}-b_i^{\text{tr}}\right\|^2,$$ and $F(x)=[F_i(x_i)]_{i=1}^N$ where $$F_i(x_i)=W_i^T(W_ix_i).$$

Note that, this problem belongs to the class of problem we defined in subsection \ref{sec:QG}. Operator $F$ is monotone and satisfies quadratic growth property but it is not be strongly monotone. Furthermore, one can show that Assumption \ref{gamma exist} is satisfied (see Corollary 2 in \cite{bednarczuk2021lipschitz}). In Figure \ref{fig:regression}, we present a performance comparison of our proposed methods. For the \texttt{wikivital mathematics} dataset \cite{rozemberczki2021pytorch}, we set the number of players $N=3$ and for the \texttt{synthetic} dataset $N=5$.  To solve the projection inexactly, observe that the sub-problem is a simple bilevel optimization problem. This type of problem has been explored in the literature \cite{jiang2023conditional,samadi2023achieving}. Here, following \cite{samadi2023achieving}, we employed the FISTA algorithm \cite{beck2009fast} to solve the corresponding regularized problem satisfying Assumption \ref{assump:inner}. For all the experiments, we execute the inner algorithm for $k\log^2(k+1)(1-10^{-4})^{-k}$ iterations, and the remaining parameters are selected according to the following table after fine-tuning.

\begin{table}[htb]
    \centering
    \caption{Parameter settings after fine-tuning for the algorithms across all datasets }
    \label{tab:param}
    \begin{tabular}{|c||c|c|}\hline
       & \texttt{wikivital mathematics}  &\texttt{synthetic} \\\hline\hline
        Stepsize $\eta$ &1e-4 &1e-4\\
        $\bar\alpha$&5e-2&1e-1\\
        $\bar b$&2.5e-2&1e-1\\
        Regularizer&1e-4&1e-4\\ \hline
    \end{tabular}
\end{table}

In Figure \ref{fig:regression}, on the left, we compared the suboptimality of the lower-level problem, and on the right, we compared the gap function based on the optimality condition \eqref{opt cond}. It is evident that both methods converge to the optimal solution. Notably, iEG-SQVI demonstrates a slightly better performance due to a smaller convergence rate factor.


\section{Conclusion}\label{summary}
\aj{This paper focuses on solving a class of monotone stochastic quasi-variational inequality problems where the operator satisfies quadratic growth property. We introduce extra-gradient and gradient-based schemes and characterize the convergence rate and oracle complexity of the proposed methods. To the best of our knowledge, our proposed algorithms are the first with a convergence rate guarantee when dealing with non-strongly monotone SQVI problems, especially when projecting onto the constraints is challenging. In our numerical experiments, we showcase the effectiveness and robustness of our methods in solving over-parameterized regression games. These results mark an important first step in exploring broader scenarios, including monotone and weakly-monotone SQVIs. Future directions also involve delving into distributed and risk-based SQVI problems.}\\

\noindent \small{{\bf Author contribution}\ \ The manuscript was mainly written and edited by ZA. AJ provided research conditions and
guidance. The technical implementations,
such as coding algorithms and setting up reproducible environments were done by ZA.
All authors read and approved the final manuscript.}\\

\noindent \small{{\bf Funding}\ \ This work is supported in part by the National Science Foundation under Grant ECCS-2231863, the University of Arizona Research, Innovation \& Impact (RII) Funding, and the Arizona
Technology and Research Initiative Fund (TRIF) for Innovative Technologies for the Fourth Industrial
Revolution initiatives.}\\

\noindent \small{{\bf Availability of data and materials}\ \  The basic code of this work are
available from the corresponding author upon reasonable request.}

\bibliographystyle{abbrv}      
\bibliography{biblio}

\appendix
\section*{Appendix}
\setcounter{equation}{0} 
\renewcommand{\theequation}
{A\arabic{equation}}
\section{Extra Gradient Method}\label{sec:extragrad}
{\bf Proof of Corollary \ref{corr2 alg2}.} 
\z{{\textbf (i)} By taking expectation from both sides of \eqref{th1 result}, choosing $N_k =N$, and using Assumption \ref{assump_error}, the following holds.}
    \begin{align}
 \nonumber\mathbb E\left[ \| x_{T}-\z{\bar x_T }\|\right]&\leq  (1-q)^{T} \|x_{0}-\z{\bar x_0}\|\aj{+ \bar{\alpha} \beta \bar b (1-q)^{T-1}\sum_{k=0}^{T-1}\left( \frac{\eta\nu' }{(1-q)^{k}\sqrt{N}} +\mathbb E[\|e'_k\|](1-q)^{-k}\right)} \\
\nonumber & \quad \aj{+ \bar{\alpha} (1-q)^{T-1}\sum_{k=0}^{T-1}\left(\frac{\eta\nu }{(1-q)^k \sqrt{N}}  +\mathbb E[\|e_k\|](1-q)^{-k}\right).}
\end{align}
\aj{Following the similar steps as in the proof of Corollary \ref{corr alg2},}
the following can be obtained by defining $ D\triangleq\sum_{\aj{k=0}}^{\aj{\infty}} \tfrac{1}{(k+1)\log^2(k+2)}\aj{\leq 3.39}$.
\begin{align*}
    &\mathbb E\left[ \| x_T-\z{\bar x_T} \|\right]  \leq \z{(1-q)}^{T} \|x_{0}-\z{\bar x_0}\|+  \frac{\bar{\alpha} \beta \bar b \eta \nu' }{q\sqrt{N}}+  \frac{ \bar{\alpha} \eta \nu}{q\sqrt{N}}    +{\bar\alpha \beta \bar b C'D (1-q)^{T-1}  }  + {\bar\alpha C D (1-q)^{T-1}}.
\end{align*}
Now by rearranging the terms, we obtain the desired result: 
\begin{align}\label{corr 2 part i}
    &\mathbb E\left[ \| x_T-\z{\bar x_T} \|\right]  \leq \z{(1-q)}^{T} \|x_{0}-\z{\bar x_0}\| + (1-q)^{T-1}\bar\alpha D { (\beta \bar b C'+C )   }   +  \frac{\bar{\alpha} \eta   }{q\sqrt{N}}(\beta \bar b \nu'+\nu). 
\end{align} 
{\textbf (ii)} Let $T= \log_{1/(1-q)}(2\bar D/\epsilon)$, $N = \tfrac{4\bar C^2}{q^2 \epsilon^2}$, and define $\bar D \triangleq \|x_{0}-\z{\bar x_0}\| +\tfrac{\bar\alpha D { (\beta \bar b C'+C )}}{(1-q)}  $ and $\bar C \triangleq \bar{\alpha} \eta (\beta \bar b \nu'+\nu)$, then from  \ref{corr 2 part i} we have that:
\begin{align*}
  \nonumber  \mathbb E\left[ \| x_T-\z{\bar x_T} \|\right] &\leq\z{(1-q)}^{T}\big( \|x_{0}-\z{\bar x_0}\| + (1-q)\bar\alpha D { (\beta \bar b C'+C ) \big)  }  +  \frac{\bar{\alpha} \eta   }{q\sqrt{N}}(\beta \bar b \nu'+\nu)\\
 &\leq\z{(1-q)}^{T}\bar D +  \frac{\bar C}{q\sqrt{N}}\leq \epsilon,
\end{align*}
where in the last inequality we used the definition of $T$ and $N$.$\square$

 {\bf Proof of Corollary \ref{corr3 alg2}.} \aj{\textbf{(i)} When a problem is deterministic, we have that $w=w'=0$. Therefore, from inequality \ref{th1 result} one can obtain: }     

     \begin{align*}
\nonumber & \| x_{T}-\bar x_T \| \leq (1-q)^{T} \|x_{0}-\bar x_0\|+{\bar\alpha\sum_{k=0}^{T-1}(1-q)^{T-k-1}\|e_k\|}+
{\bar\alpha \beta \bar b \sum_{k=0}^{T-1}(1-q)^{T-k-1}\|e'_k\|},
\end{align*}
where $q\triangleq \bar\alpha(1-\beta)(1+\beta\bar b)\in (0,1)$. Now, taking expectations from both sides of the previous inequality, the following holds:
\begin{align*}
   \mathbb E\left[ \| x_T-\z{\bar x_T} \|\right] 
& \leq \z{(1-q)}^{T} \|x_{0}-\z{\bar x_0}\| +{\bar\alpha \beta \bar b (1-q)^{T-1} \sum_{k=0}^{\ze{T-1}}\left(\z{\mathbb E[\|e'_k\|]}\z{(1-q)}^{-k}\right)} \\
&\quad +{\bar\alpha (1-q)^{T-1}\sum_{k=0}^{\ze{T-1}}\left(\z{\mathbb E[\|e_k\|]}\z{(1-q)}^{-k}\right).}
\end{align*}
\aj{Following the similar steps as in the proof of Corollary \ref{corr alg2}, the following can be obtained:}
\begin{align*}
   \nonumber&\mathbb E\left[ \| x_T-\z{\bar x_T} \|\right] \leq \z{\rho}^{T} \|x_{0}-\z{\bar x_0}\|+\rho^{T-1} \bar \alpha D(\beta\bar b C'+C).
\end{align*}
\textbf{(ii)} Similar steps as in the proof of Corollary \ref{corr alg2} (ii). $\square$
\section{Gradient Method}\label{sec:gradient}
\aj{In this section, we prove the convergence results of iG-SQVI algorithm. In our analysis, $e_k$ denotes the error of computing the projection operator, i.e., for any $k\geq 0$, \z{$   e_k\triangleq d_k- \mathcal P_{K(x_k)}\left(x_k-\eta \frac{\sum_{j=1}^{N_k}G(x_k,\xi_{j,k})}{N_k} \right) $ and we define $\bar w_{k,N_k} \triangleq \tfrac{1}{N_k}{\sum_{j=1}^{N_k} ( G(x_k,\xi_{j,k})-F(x_k))}$.}
}

{\bf Proof of Theorem \ref{th: alg1}.}
For any $k\geq 0$, we define $\bar x_k \zr{\in} \mathcal{P}_{X^*}(x_k)$ where $X^*$ denotes the set of optimal solutions of problem \eqref{prob:SQVI}. From Lemma \ref{lem:proj-optimal} we conclude that $\bar x_k= \mathcal{P}_{K(\bar x_k)}[\bar x_k-\eta F(\bar x_k)]$. Using the update rule of $x_{k+1}$, in Algorithm \ref{alg1} and the fact that $e_k$ denotes the error of computing
the projection operator, we obtain the following. 
\begin{align}\label{b1}
\nonumber\|x_{k+1}-\bar x_k\|&=\|(1-\alpha_k)x_k+\alpha_k \mathcal{P}_{K(x_k)}\left[x_k-\eta(F(x_k)+\bar w_{k,N_k})\right]\\
\nonumber &\quad+{\alpha_k e_k}-(1-\alpha_k)\bar x_k-\alpha_k\mathcal{P}_{K(\bar x_k)}\left[\bar x_k-\eta F(\bar x_k)\right]\| \\
\nonumber&\leq \|(1-\alpha_k)(x_k-\bar x_k)\|\\
\nonumber&\quad+\alpha_k\|\mathcal{P}_{K(x_k)}\left[x_k-\eta(F(x_k)+\bar w_{k,N_k})\right]\\
\nonumber&\qquad-\mathcal{P}_{K(\bar x_k)}\left[x_k-\eta(F(x_k)+\bar w_{k,N_k})\right]\| \\
\nonumber&\quad +\alpha_k\|\mathcal{P}_{K(\bar x_k)}\left[x_k-\eta(F(x_k)+\bar w_{k,N_k})\right]\\
\nonumber&\qquad-\mathcal{P}_{K(\bar x_k)}\left[\bar x_k-\eta F(\bar x_k)\right]\|+{\alpha_k\|e_k\|}\\
\nonumber&\leq \|(1-\alpha_k)(x_k-\bar x_k)\|+\alpha_k\gamma \|x_k-\bar x_k\|\\
&\quad+\alpha_k\underbrace{\|x_k-\bar x_k-\eta(F(x_k)-F(\bar x_k))\|}_{\text{term (a)}}+\alpha_k\eta\|\bar w_{k,N_k}\|+{\alpha_k\|e_k\|.}
\end{align}
By using Definition \ref{Quadratic growth def} and Lipschitz continuity, the following can be obtained, 
\begin{align}\label{bound_term_a}
\nonumber\|x_k-\bar x_k-\eta(F(x_k)-F(\bar x_k))\|^2&=\|x_k-\bar x_k\|^2+\eta^2\|F(x_k)-F(\bar x_k)\|^2\\
\nonumber&\quad-2\eta\langle x_k-\bar x_k,F(x_k)-F(\bar x_k)\rangle\\
\nonumber&\leq (1+L^2\eta^2-2\eta \mu_F)\|x_k-\bar x_k\|^2\\& \implies \text{term(a)}\leq \sqrt{1+L^2\eta^2-2\eta \mu_F}\|x_k-\bar x_k\|.
\end{align}
Now by using \eqref{bound_term_a} in \eqref{b1}, defining $\beta\triangleq\gamma+\sqrt{1+L^2\eta^2-2\eta\mu_F}$ and $q_i\triangleq (1-\beta)\alpha_i$ we get the following:
\begin{align*}
\nonumber&\|x_{k+1}-\bar x_k\| \leq\\
\nonumber&\quad(1-\alpha_k)\|x_k-\bar x_k\|+\alpha_k\eta\|\bar w_{k,N_k}\| \zal{+\alpha_k\|e_k\|}\\
\nonumber&\qquad+\alpha_k\left(\gamma+\sqrt{1+L^2\eta^2-2\eta\mu_F}\right)\|x_k-\bar x_k\|\\
\nonumber&=(1-(1-\beta)\alpha_k)\|x_k-\bar x_k\|+\alpha_k\eta\|\bar w_{k,N_k}\|+{\alpha_k\|e_k\|}\\
\nonumber&\leq { \prod_{i=0}^k (1-q_{i})\|x_{0}-\bar x_k \| + \alpha_{k} \left(\eta \|\bar{w}_{k,N_{k}}\|+\|e_k\|\right)}\\
&\qquad + \sum_{i=0}^{k-1}\left(\left(\prod_{j=i}^{k-1}(1-q_{j+1})\right) \alpha_{i} \left(\eta \|\bar{w}_{i,N_{i}}\|+\|e_i\|\right)\right).
\end{align*} 
Next, by choosing $\alpha_k=\bar \alpha$, where $0<\bar\alpha<1$, and based on the conditions of the theorem, one can easily verify that $\beta<1$ and  $q_k=q<1$ for all $k\geq 0$. $\square$\\

{\bf Proof of Corollary \ref{corr alg1}}.
\aj{Taking expectation from both {sides} of \zr{\eqref{th2 result},} choosing $N_k=\lceil \rho^{-2k}\rceil$, and using Assumption \ref{assump_error}, one can obtain:}
\begin{align}\label{choose N_k alg1}
\nonumber &\mathbb E\left[ \| x_{T}-\z{\bar x_T }\|\right] \leq\\
\nonumber& (1-q)^{T} \|x_{0}-\z{\bar x_0}\|\\
 & \aj{+ \bar{\alpha} (1-q)^{T-1}\sum_{k=0}^{T-1}\left(\eta\nu (\frac{\rho}{1-q})^k  +\mathbb E[\|e_k\|](1-q)^{-k}\right).}
\end{align}
\z{ Using the fact that $ \sum_{\ze{k}=0}^{T-1} (\z{\tfrac{\rho}{1-q}})^{\ze{k}} = \tfrac{1-(\tfrac{\rho}{1-q})^{T}}{1-\tfrac{\rho}{1-q}} $, in \eqref{choose N_k alg1}, we conclude that }
\begin{align*}
&\mathbb E\left[ \| x_T-\z{\bar x_T} \|\right] \\
& \leq \z{(1-q)}^{T} \|x_{0}-\z{\bar x_0}\|+ \bar{\alpha} \eta \nu \frac{ \rho^{T}-(1-q)^{T}}{\rho+q-1} \\
&\quad +{\bar\alpha (1-q)^{T-1}\sum_{k=0}^{\ze{T-1}}\left(\z{\mathbb E[\|e_k\|]}\z{(1-q)}^{-k}\right).}
\end{align*}
Since $\rho \geq 1-q$ and $q\in (0,1)$, one can easily show that $-\frac{(1-q)^{T}}{\rho+q-1}<0$. Hence, from the previous inequality, we conclude that
\begin{align*}
&\mathbb E\left[ \| x_T-\z{\bar x_T} \|\right] \\
& \leq \z{(1-q)}^{T} \|x_{0}-\z{\bar x_0}\|+ \frac{ \bar{\alpha} \eta \nu \rho^{T}}{\rho+q-1}  \\
&\quad  +{\bar\alpha \sum_{k=0}^{\ze{T-1}}\left(\mathbb E[\|e_k\|]\aj{(1-q)^{T-1-k}}\right)}\\x
&\leq \rho^{T} \|x_{0}-\z{\bar x_0}\|+  \frac{ \bar{\alpha} \eta \nu \rho^{T}}{\rho+q-1} +\bar\alpha \sum_{k=0}^{\ze{T-1}}\left(\mathbb E[\|e_k\|]\rho^{T-1-k}\right).
\end{align*}
\z{According to  {Assumption }\ref{assump:inner}, Algorithm \z{$\mathcal M$} has a convergence rate of $C/t_k^2$ within $t_k$ inner steps. By selecting $t_k=\tfrac{({k+1})\log^2(k+2)}{\rho^{k}}$, we conclude that $\mathbb E[ \|{e_k}\|]\leq \tfrac{C}{t_k}=\tfrac{C\rho^{k}}{(k+1)\log^2(k+2)}$. By using the tower property of expectation in the previous inequality, one can obtain: }
\begin{align*}
&\mathbb E\left[ \| x_T-\z{\bar x_T} \|\right] \leq  \rho^{T} \|x_{0}-\bar x_0\|+ \frac{ \bar{\alpha} \eta \nu \rho^{T}}{\rho+q-1} +\bar\alpha C \sum_{k=0}^{\ze{T-1}} \z{\tfrac{\zr{\rho^{T-1}}}{(k+1)\log^2(k+2)}}.
\end{align*}

Now, by using the fact that $ D\triangleq\sum_{\aj{k=0}}^{\aj{\infty}} \tfrac{1}{(k+1)\log^2(k+2)}\aj{\leq 3.39}$, and rearranging the terms, the desired result can be obtained 
\begin{align*}
   \nonumber&\mathbb E\left[ \| x_T-\z{\bar x_T} \|\right] \leq \z{\rho}^{T} \|x_{0}-\z{\bar x_0}\|+\z{\tfrac{\rho^T \bar\alpha \eta \nu}{\rho+q-1}+\rho^{T-1} \bar \alpha CD.} 
\end{align*}
  \z{{\textbf (ii)}  Similar steps as in the proof of Corollary \ref{corr alg2} (ii). $\square$}

\section{Almost Sure Convergence}\label{sec:a.s.}
{\bf Proof of Theorem \ref{thm:as_convergence}.}
By considering $\bar x_k\in \mathcal{P}_{X^*}(x_k)$ where $X^*$ denotes the set of optimal solutions of problem \eqref{prob:SQVI} and using Lemma \ref{lem:proj-optimal} we conclude that $\bar x_k= \mathcal{P}_{K(\bar x_k)}[\bar x_k-\eta F(\bar x_k)]$. Now using the update rule of $x_{k+1}$, in Algorithm \ref{alg1} and denoting $e_k$ as the error of computing the projection operator, one can obtain:
    \begin{align*}
        \nonumber\|x_{k+1}-\bar x_k\|^2&=\|(1-\alpha_k)x_k+\alpha_k \mathcal{P}_{K(x_k)}\left[x_k-\eta(F(x_k)+\bar w_{k,N_k})\right]+{\alpha_k e_k}- \bar x_k \|^2\\
&= \|x_k-\bar x_k  - \alpha_k (x_k -\mathcal{P}_{K(x_k)}\left[x_k-\eta F(x_k)\right])\\
&\quad +\alpha_k (e_k +\mathcal{P}_{K(x_k)}\left[x_k-\eta(F(x_k)+\bar w_{k,N_k})\right] - \mathcal{P}_{K(x_k)}\left[x_k-\eta F(x_k)\right])\|^2.
    \end{align*}
Defining $ H(x_k) \triangleq \alpha_k (x_k -\mathcal{P}_{K(x_k)}\left[x_k-\eta F(x_k)\right]) $, $e'_k \triangleq e_k +\mathcal{P}_{K(x_k)}\left[x_k-\eta(F(x_k)+\bar w_{k,N_k})\right] - \mathcal{P}_{K(x_k)}\left[x_k-\eta F(x_k)\right] $ and  using $(a - b + c)^2 = a^2 + b^2 + c^2 - 2ab + 2ac -2bc $ we can conclude:
\begin{align}\label{x bound 3p}
   \nonumber \|x_{k+1}-\bar x_k\|^2&= \|x_k - \bar x_k\|^2 +  \|H(x_k)\|^2 +\alpha_k^2\|e'_k\|^2 -2\langle x_{k}-\bar x_k,  H(x_k)\rangle  \\
     &\quad+ 2\langle x_{k}-\bar x_k, \alpha_k e'_k\rangle -2\langle  H(x_k) ,\alpha_k e'_k \rangle.
\end{align}

Furthermore, by using Lemma \ref{lem1} in the definition of $e'_k$ and $(a+b)^2 \leq 2(a^2+b^2)$ one can obtain:
\begin{align}\label{bound e'}
 \nonumber  \|e'_k\|^2 &\leq 2 (\|e_k\|^2 + \|\mathcal{P}_{K(x_k)}\left[x_k-\eta(F(x_k)+\bar w_{k,N_k})\right] - \mathcal{P}_{K(x_k)}\left[x_k-\eta F(x_k)\right]\|^2)\\
    &\leq 2(\|e_k\|^2 + \eta^2\| \bar w_{k,N_k}\|^2).
\end{align}
On the other hand, since $\bar x_k\in\mathcal{P}_{X^*}(x_k)$ is a solution of SQVI, one has: 
\begin{align*}
   \langle \eta F(\bar x_k ), \mathcal{P}_{K(\bar x_k)}\left[x_k-\eta F(x_k)\right] - \bar x_k \rangle  \geq 0.
\end{align*}
Now, let $u = x_k-\eta F(x_k)$ in Lemma \ref{lem1}, since $ \bar x_k  \in X$ then the following holds:
\begin{align*}
    \langle x_k-\eta F(x_k) - \mathcal{P}_{K(\bar x_k)}\left[x_k-\eta F(x_k)\right], \mathcal{P}_{K(\bar x_k)}\left[x_k-\eta F(x_k)\right] -  \bar x_k \rangle \geq 0. 
\end{align*}
Defining  $\widehat H(x_k)\triangleq \alpha_k(x_k-\mathcal{P}_{K(\bar x_k)}\left[x_k-\eta F(x_k)\right])$ and combining this inequality with the previous one, we get:
\begin{align*}
    \langle \tfrac{1}{\alpha_k}\widehat H(x_k)-\eta(F(x_k)-F(\bar x_k)), -\tfrac{1}{\alpha_k}\widehat H(x_k) +(x_k - \bar x_k)\rangle \geq 0.
\end{align*}
Now, rearranging terms and multiplying both sides by $\alpha_k$, one can obtain:
\begin{align}\label{H part bound pre}
\nonumber (x_k - \bar x_k)^T H(x_k) &= (x_k - \bar x_k)^T \widehat H(x_k) +  (x_k - \bar x_k)^T (H(x_k) - \widehat H(x_k))\\
&\geq \tfrac{1}{\alpha_k} \|\widehat H(x_k)\|^2  +\eta \alpha_k (x_k - \bar x_k)^T (F(x_k)-F(\bar x_k))\nonumber\\
&\quad -\eta  (F(x_k)-F(\bar x_k))^T \widehat H(x_k)+(x_k - \bar x_k)^T (H(x_k) - \widehat H(x_k)) \nonumber \\
   &\geq \tfrac{1}{2\alpha_k} \|\widehat H(x_k)\|^2 + \mu_F \eta \alpha_k \|x_k - \bar x_k\|^2 - \tfrac{\eta^2 L^2}{2\tau_k} \|x_k -\bar x_k\|^2 \nonumber\\
   &\quad -\tfrac{\tau_k}{2}\|\widehat H(x_k)\|^2-\alpha_k\gamma \|x_k - \bar x_k\|^2 ,
\end{align}
where the last inequality is obtained by using the QG property, Lipschitz continuity of $F$, implementing Young's inequality, and $|(x_k - \bar x_k)^T (H(x_k) - \widehat H(x_k))|\leq \alpha_k\|x_k-\bar x_k\|\|\mathcal P_{K(x_k)}[u]-\mathcal P_{K(\bar x_k)}[u]\|\leq \alpha_k\gamma \|x_k-\bar x_k\|^2$. 
Now, using the fact that $a^2\geq \frac{1}{1+\xi}b^2-\frac{1+1/\xi}{1+\xi}(a-b)^2$ we obtain $\|\widehat H(x_k)\|^2\geq \frac{1}{1+\xi}\|H(x_k)\|^2-\frac{1}{\xi}\|\widehat H(x_k) - H(x_k)\|^2$ which combined with \eqref{H part bound pre} implies that 
\begin{align}\label{H part bound}
 (x_k - \bar x_k)^T H(x_k)\nonumber&\geq 
\left(\tfrac{1}{\alpha_k}-\tfrac{\tau_k}{2}\right)\left(\tfrac{1}{1+\xi}\right) \| H(x_k)\|^2 -\frac{1}{\xi}\left(\tfrac{1}{\alpha_k}-\tfrac{\tau_k}{2}\right)\|H(x_k)-\widehat H( x_k)\|^2\\&\nonumber\quad+ \left(\mu_F \eta \alpha_k- \tfrac{\eta^2 L^2}{2\tau_k}-\alpha_k\gamma \right) \|x_k - \bar x_k\|^2\nonumber\\
&\geq \left(\tfrac{1}{\alpha_k}-\tfrac{\tau_k}{2}\right)\left(\tfrac{1}{1+\xi}\right) \| H(x_k)\|^2\nonumber\\& + \left(\mu_F \eta \alpha_k- \tfrac{\eta^2 L^2}{2\tau_k}-\alpha_k\gamma - \tfrac{\alpha_k^2\gamma^2}{\xi}\left(\tfrac{1}{\alpha_k}-\tfrac{\tau_k}{2}\right) \right) \|x_k - \bar x_k\|^2.
\end{align}

To make sure $\left(\tfrac{1}{\alpha_k}-\tfrac{\tau_k}{2}\right)\geq0$, we choose $\tau_k=\tfrac{1.8}{\alpha_k}$, then from \eqref{x bound 3p}  we can show that
\begin{align*}
     &\nonumber\|x_{k+1}-\bar x_k\|^2\\&\leq \|x_k - \bar x_k\|^2 +  \|H(x_k)\|^2 +\alpha_k^2\|e_k'\|^2-2\langle x_{k}-\bar x_k,  H(x_k)\rangle \\
     &\quad+ 2\alpha_k\| x_{k}-\bar x_k\|  \|e'_k\| +2\alpha_k\|H(x_k)\|\| e'_k\| \\
     &\leq \|x_k - \bar x_k\|^2 +  \|H(x_k)\|^2 +2\alpha_k^2(\|e_k\|^2 +\eta^2\|\bar w_{k,N_k}\|^2)-2\langle x_{k}-\bar x_k,  H(x_k)\rangle \\
     &\quad+ \alpha_k^2\| x_{k}-\bar x_k\|^2 + 2(\|e_k\|^2 +\eta^2\|\bar w_{k,N_k}\|^2) +\|H(x_k)\|^2+2\alpha_k^2(\|e_k\|^2 +\eta^2\|\bar w_{k,N_k}\|^2) \\
     &\leq \left(1-2 \mu_F \eta \alpha_k + \tfrac{1}{1.8}\eta^2 L^2\alpha_k+\alpha_k^2+2\alpha_k\gamma + \frac{0.2\alpha_k\gamma^2}{\xi}\right)\|x_k - \bar x_k\|^2 -(\tfrac{0.2}{\alpha_k(1+\xi)}-2) \|H(x_k)\|^2 \\ &\quad+(4\alpha_k^2+2)(\|e_k\|^2 +\eta^2\|\bar w_{k,N_k}\|^2),
\end{align*}
where in the first inequality we used Cauchy-Schwartz inequality, the inequality follows from applying Young's inequality and \eqref{bound e'}, and the last one follows from \eqref{H part bound}. 

Next, choosing $\eta= \frac{1.8\mu_F}{L^2}$, $\xi=9$ and $\gamma\leq 45\left(\sqrt{1+0.04\frac{\mu_F^2}{L^2}}-1\right)$ and taking conditional expectation, one can obtain
\begin{align*}
    \mathbb{E}[\|x_{k+1}-\bar x_k\|^2\mid \mathcal F_k]&\leq  (1+\alpha_k^2)\|x_{k}-\bar x_k\|^2 - (\tfrac{0.02}{\alpha_k}-2) \|H(x_k)\|^2 \\  &\quad+2(2\alpha_k^2+1)\big((\tfrac{C\rho^{k}}{(k+1)\log^2(k+2)})^2 +\tfrac{\zz{\eta^2}\nu^2}{N_k}\big), 
\end{align*}
where the last term is obtained by knowing the fact that $\mathbb E[ \|{e_k}\|]\leq \tfrac{C}{t_k}=\tfrac{C\rho^{k}}{(k+1)\log^2(k+2)}$ and Assumption \ref{assump_error}. 
Next, by using the optimality of projection onto a closed convex set and knowing $\bar x_{k+1} = \mathcal{P}_{X^*}(x_{k+1})$, we can bound the left-hand side of the above inequality, we can conclude:
\begin{align*}
    \mathbb{E}[\|x_{k+1}-\bar x_{k+1}\|^2\mid \mathcal F_k]&\leq  (1+\alpha_k^2)\|x_{k}-\bar x_k\|^2 - (\tfrac{0.02}{\alpha_k}-2) \|H(x_k)\|^2 \\ &\quad+2(2\alpha_k^2+1)\big((\tfrac{C\rho^{k}}{(k+1)\log^2(k+2)})^2 +\tfrac{\zz{\eta^2}\nu^2}{N_k}\big). 
\end{align*}
Since $\sum_{k=0}^\infty \tfrac{1}{{N_k}} < \infty$, by choosing $\alpha_k = \tfrac{0.01}{(k+1)^{0.5+\delta}}$ it is easy to show that the requirements of Lemma \ref{robins sig} 
are satisfied, and we can conclude that (i) $\|x_{k}-\bar x_k\|^2$ is a convergent sequence. Hence, the sequence $\{x_k\}_k$ is bounded, which guarantees the existence of a convergent subsequence $x_{k_q} \to x^\#$ as $q \to \infty$, for some $x^\# \in X$. (ii) $\sum_{k=0}^\infty (\tfrac{0.02}{\alpha_k}-2) \|H(x_k)\|^2 < \infty $  almost surely. From the definition of $H(x_k)$, it follows that $\sum_{k=0}^\infty 2\alpha_k(x_k - \mathcal{P}_{K(x_k)}[x_k - \eta F(x_k)]) <\infty$. Since $\sum_{k=0}^\infty\alpha_k=\infty$, we conclude that $\lim_{k \to \infty} x_k - \mathcal{P}_{K(x_k)}[x_k - \eta F(x_k)] \to 0$, which in turn implies that $\lim_{q \to \infty} x_{k_q} - \mathcal{P}_{K(x_{k_q})}[x_{k_q} - \eta F(x_{k_q})] \to 0$. 
 Now considering (i) and (ii) we can show that $ x^\# - \mathcal{P}_{K( x^\#)}[ x^\# - \eta F( x^\#)] = 0$, which means that $ x^\#$ is a solution of \ref{prob:SQVI}. Therefore, we conclude that the sequence $\{x_k\}$ has at least one limit point, and every limit point is a solution of the \ref{prob:SQVI} almost surely. \qed

\end{document}